\def\classification#1{\def\@class{#1}}
\newcommand{\Alt}{\mathop{\mathrm{Alt}}}
\newcommand{\Sym}{\mathop{\mathrm{Sym}}}
\newcommand{\suzuki}{\mathop{{^2\mathrm{B}_2}(q)}}
\newcommand{\ree}{\mathop{{^2\mathrm{G}_2}(q)}}
\def\nor#1#2{{\bf N}_{{#1}}{{(#2)}}}
\def\Zent#1{{\bf Z}{{(#1)}}}
\def\cent#1#2{{\bf C}_{{#1}}{{(#2)}}}
\DeclareFontFamily{OT1}{rsfs}{}
\DeclareFontShape{OT1}{rsfs}{n}{it}{<-> rsfs10}{}
\DeclareMathAlphabet{\mathscr}{OT1}{rsfs}{n}{it}
\newcommand{\Fix}{{\rm Fix}}
\newcommand{\Fq}{\mathbb{F}_q}
\newcommand{\SL}{\mathrm{SL}}
\newcommand{\PSL}{\mathrm{PSL}}
\newcommand{\PSU}{\mathrm{PSU}}
\newcommand{\PGU}{\mathrm{PGU}}
\newcommand{\PGL}{\mathrm{PGL}}
\newcommand{\PGammaL}{\mathrm{P\Gamma L}}
\newcommand{\GL}{\mathrm{GL}}
\newcommand{\symme}{\mathrm{Sym}}
\newcommand{\alter}{\mathrm{Alt}}
\newtheorem{prop}{Proposition}[section]
\newtheorem{thm}[prop]{Theorem}
\newtheorem{conj}[prop]{Conjecture}
\newtheorem{example}[prop]{Example}
\newtheorem{lem}[prop]{Lemma}
\theoremstyle{definition}
\numberwithin{equation}{section}
\newcommand{\SLq}{{\rm PSL}_2(q)}
\theoremstyle{definition}
\begin{document}
\title{Cherlin's conjecture for almost simple groups of Lie rank $1$}  

\author{Nick Gill}
\address{ Department of Mathematics, University of South Wales, Treforest, CF37 1DL, U.K.}
\email{nick.gill@southwales.ac.uk}

\author{Francis Hunt}
\address{ Department of Mathematics, University of South Wales, Treforest, CF37 1DL, U.K.}
\email{francis.hunt@southwales.ac.uk}

\author{Pablo Spiga}
\address{Dipartimento di Matematica e Applicazioni, University of Milano-Bicocca, Via Cozzi 55, 20125 Milano, Italy} 
\email{pablo.spiga@unimib.it}

\begin{abstract}
 We prove Cherlin's conjecture, concerning binary primitive permutation groups, for those groups with socle isomorphic to $\SLq$, $\suzuki$, $\ree$ or $\PSU_3(q)$. Our method uses the notion of a ``strongly non-binary action''. 
\end{abstract}

\maketitle 

\section{Introduction}

All groups in this paper are finite. In this note our main result is the following.

\begin{thm}\label{t: psl2}
 Let $G$ be an almost simple primitive permutation group on the set $\Omega$ with socle isomorphic to a linear group $\SLq$, or to a Suzuki group $\suzuki$, or to a Ree group $\ree$, or to a unitary group $\PSU_3(q)$. Then, either $G$  is not binary, or $G=\Sym(\Omega)\cong \Sym(5)\cong \PGammaL_2(4)\cong \PGL_2(5)$, or $G=\Sym(\Omega)\cong \Sym(6)\cong \mathrm{P}\Sigma\mathrm{L}_2(9)$.
\end{thm}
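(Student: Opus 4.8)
The plan is a case analysis over the primitive actions of $G$, organised by the isomorphism type of a point stabiliser, using the notion of a \emph{strongly non-binary action} as a transfer device. Write $T=\mathrm{soc}(G)$, one of $\SLq$, $\suzuki$, $\ree$, $\PSU_3(q)$, with $T\le G\le\Aut(T)$; a primitive action of $G$ on $\Omega$ is the coset action on $G/H$ for a maximal subgroup $H$, and put $H_0=H\cap T$. I would first record the elementary hereditary property: if $G$ is binary on $\Omega$ then for $\alpha\in\Omega$ the stabiliser $G_\alpha$ is binary on $\Omega\setminus\{\alpha\}$, so more generally the pointwise stabiliser of any $\Delta\subseteq\Omega$ is binary on $\Omega\setminus\Delta$. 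Strengthening this, a \emph{strongly} non-binary action is one carrying a rigid enough witness (a pair of tuples, pairwise $G$-equivalent but globally inequivalent, with controlled stabilisers) that its appearance as a section of $(G,\Omega)$ — through a block system, a suborbit, or the restriction of a stabiliser chain — forces $(G,\Omega)$ itself to be non-binary. So the scheme is: (i) show that, with small explicit exceptions, the natural $2$-transitive action of $G$ is strongly non-binary; (ii) show every other primitive action of $G$ exhibits such a section.

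For (i): in each family the stabiliser of a point of the natural action — $\SLq$ on $q+1$, $\suzuki$ on $q^2+1$, $\ree$ on $q^3+1$, $\PSU_3(q)$ on $q^3+1$ points — is a maximal parabolic $P=Q\rtimes L$ with $Q$ a $p$-group acting regularly on the remaining points and $L$ a complement of known (cyclic, or small) structure, so $(P,\Omega\setminus\{\alpha\})$ is permutation isomorphic to $L$ acting by conjugation on $Q$, an affine/Frobenius-type action. I would exhibit an explicit short tuple — a $5$-tuple of elements of $Q$ whose pairwise $L$-orbit data is homogeneous while its full $L$-orbit is not — that works for all $q$ above a small bound, and check that the resulting non-binariness is strong. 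The exceptions are exactly the small-$q$ failures: when $G=\Sym(\Omega)$ on $5$ points ($T=\SLq$, $q\in\{4,5\}$, $\SLq\cong\Alt(5)$), or on $6$ points ($q=9$, $\SLq\cong\Alt(6)$), the natural action is literally the natural action of a symmetric group and hence binary; all other small $q$ — together with the degenerate socles $\suzuki$ for $q=2$, $\PSU_3(q)$ for $q=2$ and $\ree$ for $q=3$ — are settled by direct computation.

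For (ii): I would go through the remaining maximal subgroups using Dickson's classification for $\SLq$, Suzuki's for $\suzuki$, Kleidman's for $\ree$, and the Mitchell--Hartley--Kleidman list for $\PSU_3(q)$. These comprise finitely many geometric families — dihedral torus normalisers $D_{q\pm1}$, imprimitive- and subfield-type subgroups and $\PGL_2(q_0)$ in $\SLq$; torus normalisers and $\mathrm{Sz}(q_0)$ in $\suzuki$; torus normalisers, $2\times\PSL_2(q)$ and ${}^2\mathrm{G}_2(q_0)$ in $\ree$; torus normalisers, an imprimitive $(q^2-1)$-type subgroup, $\SOr_3(q)\cong\PSL_2(q)$ and $\PSU_3(q_0)$ in $\PSU_3(q)$ — plus a genuinely finite list of almost simple exceptional subgroups ($\Alt(4)$, $\Sym(4)$, $\Alt(5)$, $\Alt(6)$, $\Alt(7)$, $\PSL_2(7)$, $\PSL_2(8)$, $\dots$, depending on the family). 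For each geometric family I would either find a point stabiliser of the natural $2$-transitive action containing (a conjugate of) $H_0$, whence $\Omega$ carries a $G$-invariant covering whose blocks or fibres reproduce the strongly non-binary natural action and the transfer principle applies, or, failing a convenient such containment, build a non-binary witness directly out of the suborbit/double-coset combinatorics of $H_0$ in $T$. The finitely many exceptional almost simple subgroups, and the residual small $q$, are finished by machine computation.

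The main obstacle is the non-parabolic actions in which $H_0$ is small compared with $T$, so $|\Omega|$ is large and the suborbit structure intricate: there one cannot always embed a previously-identified strongly non-binary action, and the witnessing tuples must be chosen and verified by hand. The delicate point throughout is \emph{strongness} — guaranteeing that the non-binariness survives for every overgroup $T\le G\le\Aut(T)$ rather than being an artefact of taking $G=T$ — which forces one to control the diagonal and field automorphisms uniformly in $q$, and is precisely where the symmetric-group exceptions must be carved out by hand. A secondary nuisance is bookkeeping around the exceptional isomorphisms and small groups ($\SLq$ for $q\in\{4,5,9\}$, $\mathrm{Sz}(2)$ and $\PSU_3(2)$ soluble, ${}^2\mathrm{G}_2(3)'\cong\PSL_2(8)$) and the graph automorphism of $\PSU_3(q)$, each needing separate finite treatment.
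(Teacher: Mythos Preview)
Your plan has a genuine gap at step~(ii): for the non-parabolic maximal subgroups you offer only the dichotomy ``either $H_0$ sits inside a Borel (so a block/fibre reproduces the natural action) or build a witness by hand from suborbit combinatorics,'' and you explicitly flag the second branch as the main obstacle without proposing a mechanism. But for most of the actions at issue --- the dihedral torus normalisers $D_{q\pm1}$ (and their Suzuki/Ree analogues), the involution centraliser $2\times\PSL_2(q)$ in $\ree$, and several subfield cases --- $H_0$ is \emph{not} contained in a parabolic, and there is no evident suborbit carrying a $2$-transitive section. So the proposal, as written, does not cover the bulk of the work.

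The paper's engine for exactly these cases is a uniform construction you have not identified. One takes a Klein $4$-subgroup $K=\langle g,h\rangle$ of the socle, sets
\[
\Lambda=\Fix_\Omega(g)\cup\Fix_\Omega(h)\cup\Fix_\Omega(gh),
\]
and observes that $g$ and $h$ induce on $\Lambda$ permutations $\tau_1\tau_2$ and $\tau_1\tau_3$ with pairwise disjoint supports (Example~\ref{ex: snba2}). A fixed-point count --- comparing $|\Fix_\Omega(g)|$ with $|\Fix_\Omega(K)|$ via the formula $|\Fix_\Omega(X)|=|\Omega|\cdot|H\cap X^G|/|X^G|$ --- shows the $\tau_i$ are nontrivial, and then the constraint $m_2(G)\le 2$ (or $\le 3$ when field involutions intervene) rules out any $f\in G_\Lambda$ inducing $\tau_1$ alone. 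This makes $\Lambda$ a strongly non-binary \emph{subset} in the paper's precise sense: the setwise stabiliser $G_\Lambda$ acts on $\Lambda$ with $G^\Lambda$ not $2$-closed, and Lemma~\ref{l: forbidden} transfers non-binariness back to $(G,\Omega)$. The transfer is through \emph{subsets and setwise stabilisers}, not through blocks, suborbits, or stabiliser chains as you suggest; your informal notion of ``strongly non-binary'' does not match the one that makes the argument go.

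Two smaller points. Your step~(i) is overcomplicated: a faithful $2$-transitive action with $G^\Omega\ne\Sym(\Omega)$ is \emph{automatically} strongly non-binary (Example~\ref{ex: snba1}), so no explicit $5$-tuple in $Q$ is needed --- the whole of $\Omega$ is already a beautiful subset. And $\PSU_3(q)$ has no graph automorphism; the outer automorphism group is generated by diagonal and field automorphisms only. (The paper does treat $\PSU_3(q)$ differently from the other three families, using suborbit arguments closer in spirit to your sketch, but with concrete witnesses --- e.g.\ the explicit triple $(1,\bar c,\bar c^{1+\kappa^2})$ in the $\mathcal{C}_3$ case --- rather than a general principle.)
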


Theorem~\ref{t: psl2} is a contribution towards a proof of a conjecture of Cherlin \cite{cherlin1}. This conjecture asserts that a primitive binary permutation group lies on a short explicit list of known actions.  

The precise definition of ``binary'' and ``binary action'' is given in Section~\ref{s: bin back} below. An equivalent definition, couched in terms of ``relational structures'', can be found in \cite{cherlin2}; the connection between this conjecture and Lachlan's theory of sporadic structures can be found in \cite{cherlin1}. It is this connection that really enlivens the study of binary permutation groups, and provides motivation to work towards a proof of Cherlin's conjecture. 

Let us briefly describe the status of this conjecture. By work of Cherlin \cite{cherlin2} and Wiscons \cite{wiscons}, this very general conjecture has been reduced to the following statement concerning almost simple groups.

\begin{conj}\label{conj: cherlin}
 If $G$ is a binary almost simple primitive permutation group on the set $\Omega$, then $G=\symme(\Omega)$. 
\end{conj}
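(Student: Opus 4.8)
The plan is to establish Conjecture~\ref{conj: cherlin} by means of the Classification of Finite Simple Groups, treating each family of almost simple groups in turn. A primitive action of $G$ is determined by the conjugacy class of a maximal subgroup $H=G_\omega$ serving as point stabiliser, so for each almost simple $G$ one must run through every class of maximal subgroups and decide binariness of the associated action. Since $\symme(\Omega)$ is binary, and since it is the only conclusion permitted, the real content is the following assertion: \emph{every} primitive almost simple action other than the one realising $G=\symme(\Omega)$ is non-binary. The proof therefore takes the shape of a long case analysis, organised by the socle $T$ of $G$.

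The engine driving every case is the hereditary behaviour of non-binariness, amplified by the notion of a strongly non-binary action. The basic principle is a ``go-down'' lemma: if $\Lambda\subseteq\Omega$ is a subset for which the permutation group $G_{\{\Lambda\}}^{\Lambda}$ induced on $\Lambda$ by its setwise stabiliser is non-binary, then $G$ is non-binary. Strong non-binariness is a strengthening of this property, designed so that a witnessing pair of tuples persists under the embedding of a section into an overgroup; this is precisely what allows a single small, well-understood configuration $\Lambda$ --- a ``beautiful subset'' --- to certify the non-binariness of an arbitrarily large primitive $G$ containing it. The combinatorial heart of the argument is then the uniform construction, within each primitive action, of such strongly non-binary configurations.

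For the alternating socle one uses the description of maximal subgroups of $\Sym(n)$ and $\Alt(n)$: the intransitive subgroups give the actions on $k$-subsets, the imprimitive ones give actions on partitions, and the remainder give primitive subgroup actions. Each of these is shown non-binary by exhibiting a strongly non-binary arrangement of a bounded number of subsets or blocks, the natural $n$-point action of $\Sym(n)$ being the sole binary survivor. For the classical groups one invokes Aschbacher's theorem and treats the geometric classes $\mathcal{C}_1,\dots,\mathcal{C}_8$ by producing strongly non-binary configurations of the relevant invariant objects (subspaces, decompositions, tensor factors, subfield structures), with the almost simple class $\mathcal{S}$ handled separately. The exceptional groups are handled via the Liebeck--Seitz classification of their maximal subgroups (parabolic, reductive, and almost simple), and the sporadic groups --- being finite in number with explicitly known maximal subgroups --- are dispatched by direct computation. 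The rank~$1$ groups treated in Theorem~\ref{t: psl2} form the base of this hierarchy.

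The principal obstacle is the classical groups. Aschbacher's classification is intricate, and while the geometric classes admit reasonably uniform treatment, the class $\mathcal{S}$ of irreducibly-acting almost simple subgroups has no uniform description and must be approached case by case, often drawing on the detailed representation theory of the subgroup in question. Compounding this, the small-rank and small-field cases (including the characteristic~$2$ and~$3$ anomalies, and the coincidences among low-dimensional classical groups) are exactly where generic strongly non-binary configurations degenerate, so these must be isolated and resolved by ad hoc or computational means. I expect the bulk of the difficulty to lie in verifying that a single family of strongly non-binary configurations can be made to work across all fields and all but finitely many ranks, thereby confining the genuinely exceptional analysis to a manageable finite list.
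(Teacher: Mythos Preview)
The statement you are addressing is labelled in the paper as a \emph{conjecture}, not a theorem, and the paper does not claim to prove it. The paper establishes only the special case in Theorem~\ref{t: psl2} (socle of Lie rank~$1$), and explicitly refers to the remaining cases as work in progress (see the references to \cite{gs_binary}, \cite{gls_binary}, \cite{DV_NG_PS}). There is therefore no ``paper's own proof'' against which to compare your attempt.

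What you have written is not a proof but a strategic outline for a research programme. As such it is broadly reasonable and, indeed, consistent with the approach the authors describe: partition by socle type via CFSG, list maximal subgroups, and for each primitive action exhibit a strongly non-binary subset or some other non-binary witness. But an outline is not a proof. Phrases such as ``each of these is shown non-binary by exhibiting a strongly non-binary arrangement'', ``the exceptional groups are handled via the Liebeck--Seitz classification'', and ``the sporadic groups \dots\ are dispatched by direct computation'' are assertions, not arguments; the actual work --- constructing the witnesses, controlling the small cases, dealing with the Aschbacher class $\mathcal{S}$ --- is precisely what is missing. You yourself flag this when you write ``I expect the bulk of the difficulty to lie in\dots''. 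A proof cannot contain such a sentence. Until those constructions are carried out and verified, Conjecture~\ref{conj: cherlin} remains open as far as this paper is concerned, and your submission should be regarded as a plausible plan rather than a proof.
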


One sees immediately that Theorem~\ref{t: psl2} settles Conjecture~\ref{conj: cherlin} for almost simple primitive permutation groups with socle isomorphic to $\SLq$, or $\suzuki$, or $\ree$, or $\PSU_3(q)$, that is, for each Lie type group of twisted Lie rank $1$. Theorem~\ref{t: psl2} is the third recent result of this type; in recent work, the first and third authors settled Conjecture~\ref{conj: cherlin} for groups with alternating socle, and for the $\mathcal{C}_1$ primitive actions of groups with classical socle \cite{gs_binary}. 

A brief word about our methods: the aforementioned work on groups with alternating or classical socle was based on the study of so-called ``beautiful subsets''. These objects are defined below, and their usefulness is explained by Lemma~\ref{l: forbidden} and Example~\ref{ex: snba1} below, which together imply that whenever an action admits a beautiful subset the action is not binary.

In the current note our approach is different for the reason that the family of actions under consideration -- the primitive actions of almost simple groups with socle a Lie group of Lie rank $1$-- very often do not have beautiful subsets.

To deal with this situation we need to develop a more general theory: Suppose that we have a group $G$ acting on a set $\Omega$, and we want to show that this action is non-binary. The key property of beautiful subsets that makes them useful is that they allow us to argue ``inductively'', in the sense that if we can find a subset $\Lambda$ of $\Omega$ that is ``beautiful'', then the full action of $G$ on $\Omega$ is non-binary. In order to deal with the absence of beautiful subsets, we have studied this inductive property more formally via the notion of a ``strongly non-binary subset''. The theory of such subsets is developed in \S\ref{s: bin back} and allows us to apply an inductive argument in a more general setting. 

The advantages of Theorem~\ref{t: psl2} and of this theory are several: firstly, Theorem~\ref{t: psl2} is a material advance towards a proof of Conjecture~\ref{conj: cherlin}; secondly, it demonstrates the possibility of obtaining results in situations where one cannot use the notion of a beautiful subset, as in~\cite{gs_binary}; thirdly, it turns out that the rank $1$ groups tend to be a sticking point when making general arguments concerning binary groups. We hope, therefore, that by disposing of this case here, we will be able to deal more easily with the remaining cases required for a proof of Cherlin's conjecture. Investigation in this direction is in progress, see~\cite{gls_binary}.





\subsection{Structure of the paper}

The proof of Theorem~\ref{t: psl2} is split into several parts. First, in \S\ref{s: bin back}, after giving a number of definitions, we prove some general results about binary actions; in particular Lemma~\ref{l: forbidden} is vital.

In \S\ref{s: structure} we give some basic information concerning groups with socle isomorphic to $\PSL_2(q)$; then in \S\ref{s: fp} we calculate the size of the fixed set for various elements of $\PGammaL_2(q)$ in various primitive actions; these results are then used to prove Lemmas~\ref{l: handy q odd} and \ref{l: handy q even}; it is worth remarking that these fixed point calculations yield the required conclusions almost immediately for the groups $\SLq$ and $\PGL_2(q)$, however a finer analysis is required to deal with those almost simple groups that contain field automorphisms.
The three lemmas just mentioned --  Lemmas~\ref{l: forbidden}, \ref{l: handy q odd} and \ref{l: handy q even} -- directly imply Theorem~\ref{t: psl2} for $\PSL_2(q)$ when $q\geq 9$. The remaining small cases, when $q\in\{4,5,7,8\}$, can be verified directly using GAP \cite{GAP} or by referencing the calculations of Wiscons \cite{wiscons2}.

In \S\ref{s: suzuki}, \S\ref{s: ree} and \S\ref{s: psu}, we give a proof of Theorem~\ref{t: psl2} for groups with socle $\suzuki$, $\ree$ and $\PSU_3(q)$, respectively. In the first two cases the theorems are easy consequences of propositions asserting that the primitive actions in question admit strongly non-binary subsets (see \S\ref{s: bin back} for the definition of a strongly non-binary subset). The final case -- socle $\PSU_3(q)$ -- is dealt with somewhat differently.

\section{Binary actions and strongly non-binary actions}\label{s: bin back}

Throughout this section $G$ is a finite group acting (not necessarily faithfully) on a set $\Omega$ of cardinality $t$. Here, our job is to give a definition of ``binary action'', and of ``strongly non-binary action'', and to connect these definitions to earlier work on ``beautiful sets''. Given a subset $\Lambda$ of $\Omega$, we write $G_\Lambda:=\{g\in G\mid \lambda^g\in\Lambda,\forall \lambda\in \Lambda\}$ for the set-wise stabilizer of $\Lambda$, $G_{(\Lambda)}:=\{g\in G\mid \lambda^g=\lambda, \forall\lambda\in \Lambda\}$ for the point-wise stabilizer of $\Lambda$, and $G^\Lambda$ for the permutation group induced on $\Lambda$ by the action of $G_\Lambda$. In particular, $G^\Lambda\cong G_\Lambda/G_{(\Lambda)}$.

Given a positive integer $r$, the group $G$ is called \textit{$r$-subtuple complete} with respect to the
pair of $n$-tuples $I, J \in \Omega^n$, if it contains elements that
map every subtuple of size $r$ in $I$ to the corresponding subtuple in
$J$ i.e. $$\textrm{for every } k_1, k_2, \dots, k_r\in\{ 1,
\ldots, n\}, \textrm{ there exists } h \in G \textrm{ with }I_{k_i}^h=J_{k_i}, \textrm{ for every }i \in\{
1, \ldots, r\}.$$ Here $I_k$ denotes the $k^{\text{th}}$ element of tuple
$I$ and $I^g$ denotes the image of $I$ under the action of $g$.
Note that $n$-subtuple completeness simply requires the existence of
an element of $G$ mapping $I$ to $J$.

The group $G$ is said to be of {\it arity $r$} if, for all
$n\in\mathbb{N}$ with $n\geq r$ and for all $n$-tuples $I, J \in \Omega^n$, $r$-subtuple
completeness (with respect to $I$ and $J$) implies $n$-subtuple completeness (with respect to $I$ and $J$). When $G$ has arity 2, we say that $G$ is {\it binary}. A pair $(I,J)$ of $n$-tuples of $\Omega$ is called a {\it non-binary witness for the action of $G$ on $\Omega$}, if $G$ is $2$-subtuple complete with respect to $I$ and $J$, but not $n$-subtuple complete, that is, $I$ and $J$ are not $G$-conjugate.
To show that the action of $G$ on $\Omega$ is non-binary it is sufficient to find a non-binary witness $(I,J)$. 

We say that the action of $G$ on $\Omega$ is \emph{strongly non-binary} if there exists a non-binary witness $(I,J)$ such that
\begin{itemize}
 \item $I$ and $J$ are $t$-tuples where $|\Omega|=t$;
 \item the entries of $I$ (resp. $J$) are distinct entries of $\Omega$.
\end{itemize}

\begin{example}\label{ex: snba1}{\rm
If $G$ acts $2$-transitively on $\Omega$ with kernel $K$ and $G/K\cong G^\Omega\not\cong\Sym(\Omega)$, then $G$ is strongly non-binary.
 
Indeed, by $2$-transitivity, any pair $(I,J)$ of $t$-tuples of distinct elements from $\Omega$ is $2$-subtuple complete. Since $G/K\cong G^\Omega\not\cong\Sym(\Omega)$, we can choose $I$ and $J$ in distinct  $G$-orbits. Thus $(I,J)$ is a non-binary witness.}
\end{example}

\begin{example}\label{ex: snba2}{\rm
Let $G$ be a subgroup of  $\Sym(\Omega)$, let $g_1, g_2,\ldots,g_r$ be elements of $G$, and let $\tau,\eta_1,\ldots,\eta_r$ be elements of $\Sym(\Omega)$ with
\[
 g_1=\tau\eta_1,\,\,g_2=\tau\eta_2,\,\,\ldots,\,\,g_r=\tau\eta_r.
\]
Suppose that, for every $i\in \{1,\ldots,r\}$, the support of $\tau$ is disjoint from the support of $\eta_i$; moreover, suppose  that, for each $\omega\in\Omega$, there exists $i\in\{1,\ldots,r\}$ (which may depend upon $\omega$) with $\omega^{\eta_i}=\omega$. Suppose, in addition, $\tau\notin G$.
Now, writing $\Omega=\{\omega_1,\dots, \omega_t\}$, observe that
 \[
  ((\omega_1,\omega_2,\dots, \omega_t), (\omega_1^{\tau},\omega_2^{\tau}, \ldots,\omega_t^{\tau}))
 \]
is a non-binary witness. Thus the action of $G$ on $\Omega$ is strongly non-binary.}
\end{example}

The notion of a strongly non-binary action allows us to ``argue inductively'' using suitably chosen set-stabilizers. The following lemma (which was first stated in~\cite{gs_binary} and which, in any case, is virtually self-evident) clarifies what we mean by this.

\begin{lem}\label{l: forbidden}
Suppose that there exists a subset $\Lambda \subseteq \Omega$ such that $G^\Lambda$ is strongly non-binary. Then $G$ is not binary.
\end{lem}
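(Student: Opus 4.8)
The plan is to produce an explicit non-binary witness for the action of $G$ on $\Omega$ by ``lifting'' one that exists for $G^\Lambda$ on $\Lambda$. Since $G^\Lambda$ is strongly non-binary, there is a non-binary witness $(I_0,J_0)$ where $I_0$ and $J_0$ are $|\Lambda|$-tuples whose entries are precisely the distinct elements of $\Lambda$ (each element of $\Lambda$ appearing exactly once). The idea is to extend $I_0$ and $J_0$ by appending, in the \emph{same} order, a listing of the remaining elements $\Omega\setminus\Lambda$, obtaining $|\Omega|$-tuples $I$ and $J$.

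First I would check that $(I,J)$ is $2$-subtuple complete for $G$ on $\Omega$. Take any two coordinates of $I$. If both lie in the $\Lambda$-block, then $2$-subtuple completeness for $G^\Lambda$ gives an element of $G_\Lambda$ (hence of $G$) realising the required map; if at least one lies in the appended block, the two coordinates of $I$ and of $J$ agree in that position, so the identity (or an element of $G_\Lambda$ fixing the relevant point of $\Lambda$, supplied again by $2$-subtuple completeness of the witness for $G^\Lambda$) does the job. So $(I,J)$ is $2$-subtuple complete.

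Next I would check that $I$ and $J$ are \emph{not} $G$-conjugate. Suppose $g\in G$ satisfies $I^g=J$. Since the appended block of $I$ and of $J$ is the same listing of $\Omega\setminus\Lambda$, the element $g$ fixes every point of $\Omega\setminus\Lambda$ setwise-pointwise, and therefore $g$ stabilises $\Lambda$ setwise; thus $g\in G_\Lambda$, and its image $\bar g\in G^\Lambda$ maps $I_0$ to $J_0$, contradicting the assumption that $(I_0,J_0)$ is a non-binary witness for $G^\Lambda$. Hence $(I,J)$ is a non-binary witness for $G$ on $\Omega$, so $G$ is not binary.

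The only point requiring a little care — and the place where the two bulleted conditions in the definition of ``strongly non-binary'' are genuinely used — is the step showing $g\in G_\Lambda$: this needs the entries of the $\Lambda$-block to be all of $\Lambda$ with no repetitions and no elements of $\Omega\setminus\Lambda$, so that matching the appended coordinates forces $g$ to fix $\Omega\setminus\Lambda$ pointwise and hence preserve $\Lambda$. With that observed, the argument is essentially bookkeeping about tuple coordinates; there is no real obstacle, which is consistent with the remark in the text that the lemma is ``virtually self-evident''.
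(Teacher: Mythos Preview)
Your argument has a genuine gap in the verification of $2$-subtuple completeness for the extended pair $(I,J)$, specifically in the ``mixed'' case where one coordinate lies in the $\Lambda$-block and the other in the appended block. Say position $k_1$ is in the $\Lambda$-block with $I_{k_1}=\lambda$, $J_{k_1}=\lambda'$ (possibly $\lambda\ne\lambda'$), and position $k_2$ is in the appended block with $I_{k_2}=J_{k_2}=\omega\in\Omega\setminus\Lambda$. You need $h\in G$ with $\lambda^h=\lambda'$ and $\omega^h=\omega$. Neither the identity nor $2$-subtuple completeness of $(I_0,J_0)$ for $G^\Lambda$ supplies such an element: elements of $G_\Lambda$ mapping $\lambda$ to $\lambda'$ certainly exist, but nothing guarantees that any of them fixes the particular point $\omega$. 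Concretely, let $G=\Alt(4)$ act diagonally on two disjoint copies of $\{1,2,3,4\}$, take $\Lambda$ to be the first copy, and take $(I_0,J_0)=((1,2,3,4),(1,2,4,3))$; then for the mixed pair at positions $3$ and $7$ one needs $g\in\Alt(4)$ sending $3$ to $4$ in the first copy while fixing $3$ in the second copy, which is impossible since the two actions coincide.

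The fix is simply to drop the extension: the pair $(I_0,J_0)$ itself is already a non-binary witness for $G$ acting on $\Omega$. It is $2$-subtuple complete for $G$ because any element of $G^\Lambda$ lifts to an element of $G_\Lambda\le G$. And if some $g\in G$ satisfied $I_0^{\,g}=J_0$ then, precisely because the entries of $I_0$ and of $J_0$ are each exactly the elements of $\Lambda$, we would have $\Lambda^g=\Lambda$, hence $g\in G_\Lambda$, and its image in $G^\Lambda$ would carry $I_0$ to $J_0$, contradicting the choice of $(I_0,J_0)$. This is presumably the ``virtually self-evident'' argument the paper has in mind; your final paragraph already isolates exactly this key step, so the repair is minor.
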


In what follows a \emph{strongly non-binary subset} is a subset $\Lambda$ of $\Omega$ such that $G^\Lambda$ is strongly non-binary. 

We are ready for the third main concept of this section, that of a ``beautiful subset''; this is closely related to the example of a strongly non-binary action given in Example~\ref{ex: snba1}. Specifically, we say that a subset $\Lambda\subseteq \Omega$ is a \emph{$G$-beautiful subset} if $G^\Lambda$ is a $2$-transitive subgroup of $\symme(\Lambda)$ which is neither $\alter(\Lambda)$ nor $\symme(\Lambda)$. Note that we will tend to drop the ``$G$'' in $G$-beautiful, so long as the context is clear (for instance, when $G$ is a permutation group on $\Omega$, that is, $G\le\Sym(\Omega)$).

In the light of Example~\ref{ex: snba1}, the curious reader may be wondering why the definition of a beautiful subset excludes also the possibility that $G^\Lambda=\alter(\Lambda)$. This exclusion is explained by the following lemma, which is~\cite[Corollary 2.3]{gs_binary}.

\begin{lem}\label{l: beautiful}
 Suppose that $G$ is almost simple with socle $S$. If $\Omega$ contains an $S$-beautiful subset, then $G$ is not binary.
\end{lem}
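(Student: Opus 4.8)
The plan is to deduce this from Lemma~\ref{l: forbidden} by verifying that an $S$-beautiful subset $\Lambda$ is automatically a strongly non-binary subset for $G$, i.e.\ that $G^\Lambda$ is a strongly non-binary permutation group on $\Lambda$. The starting point is that $S^\Lambda$ sits inside $G^\Lambda$ as a normal subgroup: since $S\trianglelefteq G$ we have $S_\Lambda=S\cap G_\Lambda\trianglelefteq G_\Lambda$, and passing to the quotient $G^\Lambda=G_\Lambda/G_{(\Lambda)}$ the image of $S_\Lambda$ -- which is canonically identified with $S^\Lambda=S_\Lambda/S_{(\Lambda)}$ -- is normal in $G^\Lambda$. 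By the definition of an $S$-beautiful subset, $S^\Lambda$ is $2$-transitive on $\Lambda$; hence so is $G^\Lambda$, and in particular $|\Lambda|\geq 2$.

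Now I would split into two cases. If $G^\Lambda\neq\Sym(\Lambda)$, then $G^\Lambda$ is a $2$-transitive permutation group on $\Lambda$ which is a \emph{proper} subgroup of $\Sym(\Lambda)$, so $G^\Lambda\not\cong\Sym(\Lambda)$ on grounds of order. Applying Example~\ref{ex: snba1} to the (faithful, so trivial-kernel) action of $G^\Lambda$ on $\Lambda$ shows that $G^\Lambda$ is strongly non-binary; thus $\Lambda$ is a strongly non-binary subset and Lemma~\ref{l: forbidden} yields that $G$ is not binary.

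It remains to exclude the case $G^\Lambda=\Sym(\Lambda)$. Here $S^\Lambda$ would be a $2$-transitive normal subgroup of $\Sym(\Lambda)$. Going through the normal subgroups of symmetric groups -- for $|\Lambda|\geq 5$ only $1$, $\Alt(\Lambda)$, $\Sym(\Lambda)$; for $|\Lambda|\le 4$ additionally the (trivial or cyclic) $\Alt(\Lambda)$ and the Klein four-group when $|\Lambda|=4$ -- one sees that the $2$-transitive ones are exactly $\Alt(\Lambda)$ (for $|\Lambda|\geq 4$) and $\Sym(\Lambda)$. Hence $S^\Lambda\in\{\Alt(\Lambda),\Sym(\Lambda)\}$, contradicting the definition of an $S$-beautiful subset; so this case never occurs.

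The proof is essentially formal; the only genuinely computational point is the degree-$\le 4$ bookkeeping in the last paragraph, and even that is routine. The conceptual content is entirely in the first paragraph: once one knows $S^\Lambda\trianglelefteq G^\Lambda$, the $2$-transitivity of $S^\Lambda$ together with $S^\Lambda\notin\{\Alt(\Lambda),\Sym(\Lambda)\}$ forces $G^\Lambda$ to be a proper $2$-transitive subgroup of $\Sym(\Lambda)$, and then Example~\ref{ex: snba1} and Lemma~\ref{l: forbidden} do the rest.
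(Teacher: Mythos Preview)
Your argument is correct. The paper does not give its own proof of this lemma: it simply cites \cite[Corollary~2.3]{gs_binary}. Your proof is a clean, self-contained derivation from the tools already set up in \S\ref{s: bin back}, namely Example~\ref{ex: snba1} and Lemma~\ref{l: forbidden}, and it is presumably close in spirit to what that external reference does.

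One small remark on presentation: the low-degree bookkeeping in your final paragraph is not really needed. An $S$-beautiful subset $\Lambda$ automatically has $|\Lambda|\ge 5$, since for $|\Lambda|\le 4$ every $2$-transitive subgroup of $\Sym(\Lambda)$ already contains $\Alt(\Lambda)$. So once $G^\Lambda=\Sym(\Lambda)$ is assumed, the only normal subgroups are $1$, $\Alt(\Lambda)$, $\Sym(\Lambda)$, and the $2$-transitive normal subgroup $S^\Lambda$ is forced into $\{\Alt(\Lambda),\Sym(\Lambda)\}$ immediately. This streamlines the argument slightly but does not change its substance.
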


In what follows we will see a number of examples of strongly non-binary actions of the types given in Examples~\ref{ex: snba1} and \ref{ex: snba2}, as well as examples of beautiful subsets. To study these examples we will make use of the fact that the finite faithful 2-transitive actions are all known thanks to the Classification of Finite Simple Groups. 

One naturally wonders whether other examples of strongly non-binary witnesses exist. This is indeed the case and the existence of a strongly non-binary witness is related to the classic concept of $2$-closure introduced by Wielandt~\cite{Wielandt}. Given a permutation group $G$ on $\Omega$, the \emph{$2$-closure of $G$} is the set $$G^{(2)}:=\{\sigma\in \Sym(\Omega)\mid \forall (\omega_1,\omega_2)\in \Omega\times \Omega, \textrm{there exists }g_{\omega_1\omega_2}\in G \textrm{ with }\omega_1^\sigma=\omega_1^{g_{\omega_1\omega_2}}, \omega_2^\sigma=\omega_2^{g_{\omega_1\omega_2}}\},$$
that is, $G^{(2)}$ is the largest subgroup of $\Sym(\Omega)$ having the same orbitals as $G$. The group $G$ is said to be $2$-closed if and only if $G=G^{(2)}$. We claim that $G$ is not $2$-closed if and only if $G$ has a strongly non-binary witness. Write $\Omega:=\{\omega_1,\ldots,\omega_t\}$. If $G$ is not $2$-closed, then there exists $\sigma\in G^{(2)}\setminus G$. Now, it is easy to verify that $I:=(\omega_1,\ldots,\omega_t)$ and $J:=I^\sigma=(\omega_1^\sigma,\ldots,\omega_t^\sigma)$ are $2$-subtuple complete (because $\sigma\in G^{(2)}$) and are not $G$-conjugate (because $g\notin G$). Thus $(I,J)$ is a strongly non-binary witness. The converse is similar.



\section{Groups with socle isomorphic to \texorpdfstring{$\PSL_2(q)$}{PSL2(q)}}\label{s: structure}

In this section we start by  studying some of the basic properties of involutions and Klein $4$-subgroups of the almost simple groups $G$ with socle $\PSL_2(q)$. (In particular, $\PSL_2(q)\le G\le \PGammaL_2(q)$.) All of these properties are well-known and/or easy to verify by direct calculation. We also set up some basic notation for what follows.

For a group $J$, write $m_2(J)$ for the \emph{$2$-rank} of $J$, i.e.\ the maximum rank of an abelian $2$-subgroup of $J$. If $q$ is odd and $J$ is a section of $G$ (i.e.\ a quotient of a subgroup of $G$), then $m_2(J)\leq 3$. What is more, $m_2(J)\leq 2$ unless $q$ is a square and $G$ contains a field automorphism of order $2$.


\begin{lem}\label{l: quotients split}
Let $L$ be a subgroup of $\PGL_2(q)$ with $q$ odd, and let $K$ be a subgroup of $\nor {\PGL_2(q)} L $ with $K$ isomorphic to a Klein $4$-group and with $K\cap L=1$.
Then $|L|$ is odd.
\end{lem}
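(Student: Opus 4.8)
The plan is to work inside the overgroup $M:=KL$ and exploit the very restrictive structure of subgroups of $\PGL_2(q)$. Since $K\leq\nor{\PGL_2(q)}{L}$ normalises $L$ and $K\cap L=1$, the product $M=KL$ is a subgroup of $\PGL_2(q)$ with $L\trianglelefteq M$ and $M/L\cong K\cong C_2\times C_2$. Suppose, for a contradiction, that $|L|$ is even.

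First I would determine a Sylow $2$-subgroup $P$ of $M$. As $|L|$ is even and $|M:L|=4$, we have $|P|=|M|_2\geq 8$. Since $q$ is odd, a Sylow $2$-subgroup of $\PGL_2(q)$ is dihedral (for instance it is contained in one of the dihedral subgroups $D_{2(q-1)}$, $D_{2(q+1)}$ which between them carry the full $2$-part of $|\PGL_2(q)|$), so every $2$-subgroup of $\PGL_2(q)$ is cyclic or dihedral; hence $P$ is cyclic or dihedral. Moreover $|M:PL|$ divides both $|M:P|$, which is odd, and $|M:L|=4$, so $PL=M$ and the restriction of the quotient map gives a surjection $P\twoheadrightarrow M/L\cong C_2\times C_2$. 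A cyclic group has no such quotient, so $P\cong D_{2^n}$ for some $n\geq 3$. The kernel $P\cap L$ of this surjection is a normal subgroup of $P$ of index $4$ containing the derived subgroup $[P,P]$; as $[P,P]$ already has index $4$ in $D_{2^n}$, we conclude $P\cap L=[P,P]$.

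Now write $P=\langle r,s\mid r^{2^{n-1}}=s^2=1,\ r^s=r^{-1}\rangle$, so that $[P,P]=\langle r^2\rangle$. Since $n\geq 3$ the cyclic group $\langle r\rangle$ has a unique involution $z:=r^{2^{n-2}}$; it lies in $\langle r^2\rangle=P\cap L\subseteq L$ and generates the centre $Z(P)$. On the other hand $K$ is a $2$-subgroup of $M$, so $K^{g^{-1}}\leq P$ for some $g\in M$ by Sylow's theorem. I claim that every Klein four-subgroup $V$ of $D_{2^n}$ contains $z$: if $V\cap\langle r\rangle=1$ then all three non-identity elements of $V$ are reflections, but the product of two distinct reflections is a nontrivial element of $\langle r\rangle$ lying in $V$, a contradiction; hence $V\cap\langle r\rangle$ is a nontrivial subgroup of the cyclic group $\langle r\rangle$ and so contains $z$. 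Applying this to $V=K^{g^{-1}}$ gives $z\in K^{g^{-1}}$, that is, $z^g\in K$; since $z\in L$ and $L\trianglelefteq M$ we also have $z^g\in L$, so $z^g\in K\cap L=1$ — impossible, as $z^g$ is an involution. Therefore $|L|$ is odd.

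The only ingredient here beyond Sylow's theorem and elementary facts about dihedral $2$-groups is that the Sylow $2$-subgroups of $\PGL_2(q)$ are dihedral when $q$ is odd, and this is the point I would take care to justify or cite precisely. An alternative, equally short route avoids even this by appealing directly to Dickson's classification of the subgroups of $\PGL_2(q)$: among the possible isomorphism types (cyclic, dihedral, $A_4$, $S_4$, $A_5$, $E_{p^k}{:}C_d$, $\PSL_2(q_0)$, $\PGL_2(q_0)$) only the dihedral groups $D_{2m}$ with $m$ even have $C_2\times C_2$ as a quotient, so $M\cong D_{2m}$ with $m$ even; comparing indices gives $L=[M,M]$, which is cyclic of order $m/2$, and the same central-involution argument shows that $K\cap L=1$ forces $m/2$ to be odd.
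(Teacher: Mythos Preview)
Your argument is correct. Both your main route and the alternative via Dickson's list go through as written; the only external input you need is that Sylow $2$-subgroups of $\PGL_2(q)$ are dihedral when $q$ is odd, which is standard.

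The paper's proof is shorter and rests on a closely related fact, namely that $m_2(\PGL_2(q))=2$ for $q$ odd. It takes a Sylow $2$-subgroup $P$ of $KL$ containing $K$, writes $P=K\ltimes Q$ with $Q=P\cap L$ Sylow in $L$, and observes that if $Q\ne 1$ then the $2$-group $K$ acting on the $2$-group $Q$ has a nontrivial fixed point, producing an elementary abelian $2$-subgroup of rank at least $3$ inside $\PGL_2(q)$, a contradiction. So where the paper invokes the $2$-rank bound once and is done, you instead exploit the finer dihedral structure explicitly: you identify $P\cap L$ as the derived subgroup of the dihedral group $P$, locate the central involution $z$ inside it, and then use that every Klein four-subgroup of a dihedral $2$-group contains $z$ to force $z$ into a conjugate of $K$. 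The two arguments are morally the same (dihedral Sylow $2$-subgroups is exactly what makes $m_2=2$), but the paper's version is the more economical packaging, while yours has the virtue of being entirely self-contained once the dihedral fact is granted.
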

\begin{proof}
Let $P$ be a Sylow $2$-subgroup of $\langle K,L\rangle=K\ltimes L$ containing $K$. Then $P=K\ltimes Q$, for some Sylow $2$-subgroup $Q$ of $L$. If $Q\ne 1$, then $K$ centralises a non-identity element of $Q$ and hence $m_2(\PGL_2(q))\ge m_2(P)=m_2(K\ltimes Q)\ge m_2(K)+m_2(\cent Q K)\ge 2+1=3$, a contradiction.
\end{proof}

Suppose that $q$ is odd. There is exactly one $\PGL_2(q)$-conjugacy class of Klein $4$-subgroups of $\PSL_2(q)$, and one can check directly that $\cent{\PGL_2(q)} K =K$ for each Klein $4$-subgroup of $\PSL_2(q)$. When $q\equiv \pm 3\pmod 8$, a Sylow $2$-subgroup of $\PSL_2(q)$ is a Klein $4$-subgroup and, by Sylow's theorems, there is exactly one $\PSL_2(q)$-conjugacy class of Klein $4$-subgroups of $\PSL_2(q)$; in this case $\nor {\PSL_2(q)} K \cong \Alt(4)$. When $q\equiv \pm 1\pmod 8$, there are two $\PSL_2(q)$-conjugacy classes of Klein $4$-subgroups of $\PSL_2(q)$ and these are fused in $\PGL_2(q)$; in this case $\nor {\PSL_2(q)} K \cong\Sym(4)$.

We need information concerning involutions in $\PGammaL_2(q)\setminus\PGL_2(q)$ -- such involutions must be field automorphisms, as defined in~\cite{gls3}. The following result is a special case of \cite[Prop. 4.9.1]{gls3}.

\begin{lem}\label{l: fields}
 Let $f_1,f_2\in\PGammaL_2(q)\setminus\PGL_2(q)$ be of order $t$ for some prime $t$, and suppose that $f_1\PGL_2(q)=f_2\PGL_2(q)$. Then $f_1$ and $f_2$ are $\PGL_2(q)$-conjugate.
\end{lem}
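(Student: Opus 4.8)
The plan is to deduce this as the type-$A_1$ instance of the classification of field automorphisms up to conjugacy, that is, from \cite[Prop.~4.9.1]{gls3}; below I sketch the mechanism behind that result in the present situation, since it isolates where the primality of $t$ and the use of the full adjoint group $\PGL_2(q)$ enter.

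First I would set up coordinates. Write $q=p^f$. Since $\PGammaL_2(q)/\PGL_2(q)\cong\mathrm{Gal}(\Fq/\bF_p)$ is cyclic of order $f$ and $f_1$ has prime order $t$ with $f_1\notin\PGL_2(q)$, the image of $f_1$ in this quotient has order exactly $t$; hence $t\mid f$, and as $f_1\PGL_2(q)=f_2\PGL_2(q)$ both elements lie in a single coset $C$ whose image generates the unique subgroup of order $t$ in $\mathrm{Gal}(\Fq/\bF_p)$. Fixing a generator $\phi$ of that subgroup, realised on $\PGL_2(q)$ as an entrywise Frobenius (so that $\cent{\PGL_2(q)}{\phi}\cong\PGL_2(q_0)$ with $q_0=q^{1/t}$), I may write $f_1=g_1\phi$ and $f_2=g_2\phi$ with $g_1,g_2\in\PGL_2(q)$, and put $g:=f_2f_1^{-1}\in\PGL_2(q)$.

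Next I would reduce to a cohomology vanishing. A routine rearrangement of $x^{-1}f_1x=f_2$ shows that $f_1$ and $f_2$ are $\PGL_2(q)$-conjugate if and only if $g$ is a coboundary for the conjugation action of $\langle f_1\rangle\cong\mathbb{Z}/t\mathbb{Z}$ on $\PGL_2(q)$; moreover the relation $f_2^{\,t}=1$ is precisely the $1$-cocycle identity for $g$, so that every element of $C$ of order $t$ arises in this way from a $1$-cocycle. Hence the lemma is equivalent to the statement that the nonabelian cohomology set $H^1(\langle f_1\rangle,\PGL_2(q))$ is trivial. The conjugation action of $\langle f_1\rangle$ differs from that of $\langle\phi\rangle$ by an inner automorphism of $\PGL_2(q)$, so this set is in canonical bijection with $H^1(\langle\phi\rangle,\PGL_2(q))=H^1(\mathrm{Gal}(\Fq/\Fqn),\PGL_2(\Fq))$, which classifies central simple $\Fqn$-algebras of degree $2$ that are split by $\Fq$.

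Finally, this last set is trivial because the Brauer group of a finite field vanishes (Wedderburn's little theorem), so there is nothing further to do; alternatively one simply quotes \cite[Prop.~4.9.1]{gls3}. The step I expect to be the real obstacle is exactly this descent from the algebraic group to the finite group: over the connected group $\PGL_2(\overline{\mathbb{F}}_p)$ the Lang--Steinberg theorem shows immediately that any two Steinberg endomorphisms in the coset $\PGL_2(\overline{\mathbb{F}}_p)\,\phi$ are conjugate, so nothing is gained by passing to the algebraic closure --- all of the content lies in arranging the conjugating element to lie in $\PGL_2(q)$, and it is here that the adjoint type of $\PGL_2$ is used. I therefore expect the cleanest route to be a direct appeal to \cite[Prop.~4.9.1]{gls3}, with the Brauer-group remark serving as a self-contained alternative.
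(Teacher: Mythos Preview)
Your approach is the same as the paper's: the paper gives no argument at all beyond the single sentence ``The following result is a special case of \cite[Prop.~4.9.1]{gls3},'' and your primary proposal is precisely to invoke that proposition. Your additional cohomological sketch (reducing to the vanishing of $H^1(\mathrm{Gal}(\Fq/\Fqn),\PGL_2(\Fq))$ via Hilbert~90 and the triviality of the Brauer group of a finite field) is correct and goes beyond what the paper provides, but it is supplementary commentary rather than a different route.
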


\subsection{Fixed point calculations}\label{s: fp}

We let $G$ be a group having socle $S$ with $S\cong \PSL_2(q)$. Using the classification of the maximal subgroups of $G$ (see for example~\cite{bhr}), it is important to observe that, for every maximal subgroup $M$ of $G$ there exists a maximal subgroup $H$ of $S$ with $M=\nor G H$; in particular, this allows us to identify (up to permutation isomorphism) each primitive $G$-set $\Omega$ with the set of $G$-conjugates of some maximal subgroup $H$ of $S$.
Therefore, we let $H$ be a maximal subgroup of $S$ with $\nor G H $ maximal in $G$, and set $\Omega$ to be $H^G:=\{H^g\mid g\in G\}$, the set of all conjugates of $H$ in $G$. All possibilities for $H$ and $|\Omega|$ are given in the first and in the third column of Tables~\ref{t: inv q odd} and \ref{t: inv q even}, where in Table~\ref{t: inv q odd} the symbol $\zeta$ is defined by
\begin{equation}\label{e: zeta}
 \zeta:=\begin{cases}
        2 & \textrm{if }G\not\le \mathrm{P}\Sigma L_2(q) \textrm{ and }q \textrm{ is odd}, \textrm{ or }q\textrm{ is even},\\
        1 & \textrm{if }G\le \mathrm{P}\Sigma L_2(q) \textrm{ and }q \textrm{ is odd}.
       \end{cases}
\end{equation}
(See \cite{bhr} to verify this. The conditions that are listed in Table~\ref{t: inv q odd} are necessary for the  action of $G$ on $\Omega$ to be primitive, but they are not necessarily sufficient.)  Finally, we write $\mathcal{P}(H)$ for the power set of $H$.

In what follows, we calculate the number of fixed points of an involution $g\in S$, and (when $q$ is odd) of a Klein $4$-subgroup $K\leq S$, for the action of $G$ on $\Omega$. (Given a subset $Y$ of a permutation group $X$ on $\Omega$, we write $\Fix_\Omega(Y):=\{\omega\in \Omega\mid \omega^y=\omega,\forall y\in Y\}$ and simply $\Fix_\Omega(y)$ when the set $Y$ consists of the single element $y$.)

To calculate the number of fixed points of $g$ and of $K$, we make use of the well-known formulas (see for instance~\cite[Lemma~$2.5$]{LiebeckSaxl})
\begin{equation}\label{e: fora}
 |\Fix_\Omega(g)| = \frac{|\Omega|\cdot |H\cap g^G|}{|g^G|},\qquad
 |\Fix_\Omega(K)| = \frac{|\Omega|\cdot |\mathcal{P}(H)\cap K^G|}{|K^G|}.
\end{equation}

Given an involution $g\in S$, from~\cite{gls3} we obtain 
\[
 |g^G|=\begin{cases}
        \frac12q(q-1), & \textrm{if } q\equiv 3\pmod 4,\\
        \frac12q(q+1), & \textrm{if }q\equiv 1\pmod 4, \\
        q^2-1, &\textrm{if } q \textrm{ even}.
       \end{cases}
\]
Using this information, Eq.~\eqref{e: fora} and the fact that $\PSL_2(q)$ has a unique conjugacy class of involutions, it is a straightforward computation to verify the fourth and fifth column in Table~\ref{t: inv q odd} and the third and fourth column in Table~\ref{t: inv q even}.
\begin{table}
\begin{adjustbox}{angle=90}
\begin{tabular}{|c|c|c|c|c|c|c|}
\hline
$H$ & Conditions & $|\Omega|$ & $|H\cap g^G|$ & $|\Fix_\Omega(g)|$&$|\mathcal{P}(H)\cap K^G|$&$|\Fix_\Omega(K)|$\\
\hline
$[q]:(\frac{q-1}{2})$ & None & $q+1$ & $\begin{cases}
                0, & q\equiv 3(4) \\
                q, & q\equiv 1(4)
               \end{cases}$ & $\begin{cases}
                0, & q\equiv 3(4) \\
                2, & q\equiv 1(4)
               \end{cases}$  &$0$&$0$\\
$D_{q-1}$ & None & $\frac{q(q+1)}{2}$ & $\begin{cases}
                \frac{q-1}{2}, & q\equiv 3(4) \\
                \frac{q+1}{2}, & q\equiv 1(4)
               \end{cases}$ &  $\frac{q+1}{2}$&$\begin{cases}0,&q\equiv 3(8)\\ \frac{q-1}{4},&q\equiv 5(8)\\\frac{\zeta(q+1)}{8},&q\equiv 1(8)\\0,&q\equiv 7(8)\end{cases}$&$\begin{cases}0,&q\equiv 3(8)\\ 3,&q\equiv 5(8)\\3,&q\equiv 1(8)\\0,&q\equiv 7(8)\end{cases}$\\
$D_{q+1}$ & None & $\frac{q(q-1)}{2}$ & $\begin{cases}
                \frac{q+3}{2}, & q\equiv 3(4) \\
                \frac{q+1}{2}, & q\equiv 1(4)
               \end{cases}$ & $\begin{cases}
                \frac{q+3}{2}, & q\equiv 3(4) \\
                \frac{q-1}{2}, & q\equiv 1(4)
               \end{cases}$ &$\begin{cases}\frac{q+1}{4},&q\equiv 3(8)\\0,&q\equiv 5(8)\\0,&q\equiv 1(8)\\\frac{\zeta(q+1)}{8},&q\equiv 7(8)\end{cases}$&$\begin{cases}3,&q\equiv 3(8)\\ 0,&q\equiv 5(8)\\0,&q\equiv 1(8)\\3,&q\equiv 7(8)\end{cases}$\\
$\PSL_2(q_0)$ & $q=q_0^a$, $a$\textrm{ odd } & $\frac{q(q^2-1)}{q_0(q_0^2-1)}$ & $\begin{cases}
        \frac{q_0(q_0-1)}{2}, & q_0\equiv 3(4)\\
        \frac{q_0(q_0+1)}{2}, & q_0\equiv 1(4) \\
       \end{cases}$ & $\begin{cases}
        \frac{q+1}{q_0+1}, & q_0\equiv 3(4)\\
        \frac{q-1}{q_0-1}, & q_0\equiv 1(4) \\
       \end{cases}$ &$\begin{cases}\frac{q_0(q_0^2-1)}{24},&q\equiv \pm 3(8)\\\frac{\zeta q_0(q_0^2-1)}{48},&q\equiv \pm 1(8)\end{cases}$&$1$\\
$\PGL_2(q_0)$ & $q=q_0^2$, $\zeta=1$ & $\frac{\sqrt{q}(q+1)}{2}$ & $q$ & $\sqrt{q}$ &$\frac{q_0(q_0^2-1)}{24}$ or $\frac{q_0(q_0^2-1)}{8}$&$1$ or $3$\\
$\Alt(4)$ & $q=p\equiv \pm 3(8)$ & $\frac{q(q^2-1)}{24}$ & $3$ & $\begin{cases}
                \frac{q+1}{4}, & q\equiv 3(8) \\
                \frac{q-1}{4}, & q\equiv 5(8)
               \end{cases}$ &$1$&$1$\\
$\Sym(4)$ & $q=p\equiv \pm 1(8)$, $\zeta=1$ & $\frac{q(q^2-1)}{48}$ & $9$ & $\begin{cases}
                \frac{3(q+1)}{8}, & q\equiv 7(8) \\
                \frac{3(q-1)}{8}, & q\equiv 1(8)
               \end{cases}$ &$1$ or $3$&$1$ or $3$\\
$\Alt(5)$ & $\begin{array}{l}q=p, q\equiv \pm 1 (10), \textrm{ or}\\ q=p^2, p\equiv \pm 3(10) \end{array}$ & $\frac{\zeta q(q^2-1)}{120}$ & $15$ & $\begin{cases}
                \frac{\zeta(q+1)}{4}, & q\equiv 3(4) \\
                \frac{\zeta(q-1)}{4}, & q\equiv 1(4)
               \end{cases}$&$5$&$\begin{cases}\zeta,&q\equiv \pm 3(8)\\2,&q\equiv \pm 1(8)\end{cases}$ \\
               \hline
\end{tabular}
\end{adjustbox}
\caption{Fixed points of involutions in $S$ and of a Klein $4$-subgroup of $S$, for $q$ odd. The symbol $\zeta$ is defined in~\eqref{e: zeta}.}\label{t: inv q odd}
\end{table}
\begin{table}
\begin{tabular}{|c|c|c|c|}
\hline
$H$ & $|\Omega|$ & $|H\cap g^G|$ & $|\Fix_\Omega(g)|$\\
\hline
$[q]:(q-1)$ & $q+1$ & $q-1$ & $1$ \\
$D_{2(q-1)}$  & $\frac12q(q+1)$ & $q-1$ & $\frac12q$ \\
$D_{2(q+1)}$  & $\frac12q(q-1)$ & $q+1$ & $\frac12q$ \\
 $\SL_2(q_0)$ & $\frac{q(q^2-1)}{q_0(q_0^2-1)}$ & $q_0^2-1$ & $\frac{q}{q_0}$ \\
 \hline
\end{tabular}
\caption{Fixed points of involutions in $S$ for $q$ even.}\label{t: inv q even}
\end{table}

Suppose that $q\equiv \pm 3\pmod 8$ and let $K$ be a Klein $4$-subgroup of $S$. As we mentioned above, $K$ is a Sylow $2$-subgroup of $S$, all Klein $4$-subgroups of $S$ are conjugate and $\nor {\PSL_2(q)}K\cong \Alt(4)$. Therefore $|K^G|=\frac{1}{24}q(q^2-1)$. Using this and Eq.~\eqref{e: fora}, it is easy to confirm (when $q\equiv \pm 3\pmod 8$) the veracity of the sixth and seventh column  in Table~\ref{t: inv q odd}. (Note that the $\PGL_2(q_0)$ and $\Sym(4)$ rows do not apply when $q\equiv \pm3\pmod 8$.)

Suppose now that $q\equiv \pm 1\pmod 8$ and let $K$ be a Klein $4$-subgroup of $S$. In this case, there are two $S$-conjugacy classes of Klein $4$-subgroups and, regardless of the $S$-conjugacy class on which $K$ lies, we have $\nor G K\cong \Sym(4)$. In particular,
\[
 |K^G|=\frac{1}{48}\zeta q(q^2-1),
\]
where $\zeta$ is the parameter that was defined in \eqref{e: zeta}. As above, using this and Eq.~\eqref{e: fora}, it is easy to confirm (when $q\equiv \pm 1\pmod 8$) the veracity of the sixth and seventh column  in Table~\ref{t: inv q odd}. (Note that the $\Alt(4)$ row does not apply when $q\equiv \pm1\pmod 8$.)

For the proof of Theorem~\ref{t: psl2}, we also need to compute the number of fixed points of field involutions of $G$ only for certain primitive actions when $q$ is odd: this information is tabulated in Table~\ref{t: f field}. Of course, here we assume that $q$ is a square and that $G$ does contain a field automorphism of order $2$. Now observe that
\[
 |f^G|=
\begin{cases}\frac{\zeta}{2}\sqrt{q}(q+1),&\textrm{if }q \textrm{ is odd},\\
\sqrt{q}(q+1),&\textrm{if }q \textrm{ is even}.
\end{cases}
\]
From this and~\eqref{e: fora}, the veracity of Table~\ref{t: f field} follows from easy calculations (which we omit).

Note that Lemma~\ref{l: fields} means that it is convenient to assume that $G\geq \PGL_2(q)$ where this makes no difference; however for the final action in Table~\ref{t: f field}, we must assume that $G$ does {\bf not} contain $\PGL_2(q)$ since otherwise the action is not primitive. To make this clear we state the assumed value of $\zeta$ in the ``Conditions'' column in each case.

\begin{table}
\begin{tabular}{|c|c|c|c|c|}
\hline
$H$ & Conditions & $|\Omega|$ & $|\nor GH\cap f^G|$ & $|\Fix_\Omega(f)|$\\
\hline
$D_{q-1}$ & $\zeta=2$ & $\frac12q(q+1)$ & $2\sqrt{q}$ & $q$ \\
$D_{q+1}$ & $\zeta=2$ & $\frac12q(q-1)$ & 0 & 0 \\
$\PSL_2(q_0)$ & $q=q_0^a$, $a$\textrm{ odd }, $\zeta=2$, & $\frac{q(q^2-1)}{q_0(q_0^2-1)}$ & $\sqrt{q_0}(q_0+1)$ & $\frac{\sqrt{q}(q-1)}{\sqrt{q_0}(q_0-1)}$  \\
$\Alt(5)$ & $q=p^2\equiv \pm1\pmod{10}$, $\zeta=1$ & $\frac1{120}q(q^2-1)$ & 
$10$ & $\frac{1}{6} \sqrt{q}(q-1)$  \\
               \hline
\end{tabular}
\caption{Fixed points of field automorphisms of order $2$ for selected primitive actions of $G$ with $q$ odd.}\label{t: f field}
\end{table}


We are now ready to prove the two lemmas that together yield Theorem~\ref{t:  psl2} for groups with socle $\PSL_2(q)$.

\begin{lem}\label{l: handy q odd}
Let $G$ be an  almost simple primitive permutation group on the set $\Omega$ with socle isomorphic to $\SLq$ with $q$ odd. If $q>9$, then $\Omega$ contains a strongly non-binary subset.
\end{lem}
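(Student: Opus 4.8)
The plan is to exhibit, for each maximal subgroup $H$ of $S=\PSL_2(q)$ listed in Table~\ref{t: inv q odd}, a suitable subset $\Lambda\subseteq\Omega$ such that $G^\Lambda$ is strongly non-binary, invoking Lemma~\ref{l: forbidden}. The natural candidates for such $\Lambda$ are fixed-point sets: take a Klein $4$-subgroup $K\le S$ and set $\Lambda=\Fix_\Omega(K)$, or take an involution $g\in S$ (or a field involution) and set $\Lambda=\Fix_\Omega(g)$; in both cases the relevant normaliser ($\nor G K$, $\cent G g$, or the analogous centraliser of a field involution) acts on $\Lambda$, and the point is to identify the induced group $G^\Lambda$ and show it is $2$-transitive but not $\Sym(\Lambda)$ (so we are in the situation of Example~\ref{ex: snba1}, i.e. $\Lambda$ is a beautiful subset), or more generally that it admits a non-binary witness as in Example~\ref{ex: snba2}.

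First I would split by the type of $H$. For $H=D_{q-1}$ and $H=D_{q+1}$ the action is on cosets, i.e. on the $\frac{q(q\pm1)}{2}$ points coming from pairs of points (or non-isotropic $1$-spaces) of the projective line; here I expect to pick $K$ a Klein $4$-subgroup with $|\Fix_\Omega(K)|=3$ (the rows of Table~\ref{t: inv q odd} where this value appears) and to check that $\nor G K\cong\Sym(4)$ or $\Alt(4)$ induces $\Sym(3)$ on these three points — which is $2$-transitive and equals $\Sym(\Lambda)$, so that alone is not enough; instead I would look for a larger $\Lambda$, namely $\Fix_\Omega(g)$ for a well-chosen involution $g$, on which $\cent G g$ (a dihedral-type group, or $\PGL_2$ of a subfield when $H$ is a subfield subgroup) induces a $2$-transitive group that is visibly not the full symmetric group because of the arithmetic of $|\Fix_\Omega(g)|$ versus the order of $\cent G g$. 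The subfield cases $H=\PSL_2(q_0)$ and $H=\PGL_2(q_0)$ are the most promising: there the fixed set of an involution or field involution is again a set of projective-line type for the subfield, and $G^\Lambda$ contains a copy of $\PSL_2(q_0)$ or $\PGL_2(q_0)$ acting $2$-transitively and strictly smaller than the symmetric group, giving a beautiful subset directly via Lemma~\ref{l: beautiful}. For $H=[q]{:}\frac{q-1}{2}$ (the action on the projective line, $|\Omega|=q+1$) the group $G$ itself already acts $2$-transitively, $G^\Omega\not\cong\Sym(\Omega)$ for $q>9$, so $\Omega$ itself is a strongly non-binary (indeed beautiful) subset by Example~\ref{ex: snba1}; this is the easy case. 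The sporadic rows $\Alt(4),\Sym(4),\Alt(5)$ have $|\Omega|$ of order $q^3$ but have $|\Fix_\Omega(K)|\in\{1,2,3,5\}$ or fixed sets that for $q>9$ are large enough to carry a $2$-transitive section — I would handle these by the same fixed-point-of-involution trick, using the $\Fix_\Omega(g)$ columns of Table~\ref{t: inv q odd}, where e.g. $|\Fix_\Omega(g)|$ grows linearly in $q$ and the acting group is dihedral, so one compares with a known $2$-transitive list.

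The technical backbone is: for a chosen $Y\in\{g,K,f\}$, the induced action of $\nor G Y$ on $\Fix_\Omega(Y)$ can be described explicitly (it is the action of $\nor G Y/\cent G {(\Fix_\Omega(Y))}$ on cosets of some subgroup), and the sizes in Table~\ref{t: inv q odd} and Table~\ref{t: f field} let one pin down $|\Fix_\Omega(Y)|$ exactly; then one checks $2$-transitivity (often because $\nor S Y$ or $\cent S Y$ already acts $2$-transitively on a projective line or conic of the relevant subfield) and checks $G^{\Fix_\Omega(Y)}\ne\Sym(\Fix_\Omega(Y))$ by an order comparison. The main obstacle I anticipate is the bookkeeping across the many rows and residue classes of $q\bmod 8$ (and $q\bmod 10$ for $\Alt(5)$), and in particular making a uniform choice that works whenever $q>9$: for the smallest surviving values there is a real danger that the only available $\Lambda$ has $G^\Lambda=\Sym(\Lambda)$ or $\Alt(\Lambda)$ with $|\Lambda|\le 4$, in which case the strongly-non-binary machinery fails and one genuinely needs $q$ not too small — this is presumably exactly why the hypothesis is $q>9$ rather than $q\ge 5$, and verifying that the threshold is correct (not, say, $q>11$) in every row is the delicate point. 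A secondary subtlety, flagged already in the excerpt, is the case of groups containing field automorphisms, where $G^\Lambda$ may be strictly larger than the subgroup-of-a-subfield one first writes down, so one must use Lemma~\ref{l: fields} and the field-involution data in Table~\ref{t: f field} to control it and still conclude $G^\Lambda\ne\Sym(\Lambda)$.
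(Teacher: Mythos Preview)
Your plan has a real gap in the main case. For Lines~2, 3, 6, 7, 8 of Table~\ref{t: inv q odd} you propose to take $\Lambda=\Fix_\Omega(g)$ for an involution $g\in S$ and argue that $\cent G g$ acts $2$-transitively on it. But $\cent S g$ is a dihedral group of order $q\pm1$, and a dihedral group is $2$-transitive only on a set of size at most~$3$; here $|\Fix_\Omega(g)|$ is roughly $q/2$ or $q/4$, far too large. Adjoining field automorphisms does not rescue this: the resulting centraliser is still metacyclic-by-small, nowhere near $2$-transitive on a set of this size. The alternative you mention, $\Lambda=\Fix_\Omega(K)$, is too small ($|\Fix_\Omega(K)|\le 3$ in every row) to be useful, as you yourself note. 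So for these lines you have no candidate $\Lambda$ that is a beautiful subset, and your reference to Example~\ref{ex: snba2} is only a gesture: you have not said what the elements $g_i$, $\tau$, $\eta_i$ would be.

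The paper's actual argument for these lines is the idea you are missing. One takes a Klein $4$-group $K=\langle g,h\rangle\le S$ and sets $\Lambda=\Fix(g)\cup\Fix(h)\cup\Fix(gh)$. On $\Lambda$ the elements $g$ and $h$ induce permutations $\tau_1\tau_2$ and $\tau_1\tau_3$ with pairwise disjoint supports, so by Example~\ref{ex: snba2} the set $\Lambda$ is strongly non-binary \emph{unless} some $f\in G_\Lambda$ induces $\tau_1$ alone. The crux is then to rule out such an $f$: together with $g$ and $h$ it would force an elementary abelian $2$-group of rank $3$ in a section of $G$, contradicting $m_2(\PGL_2(q))=2$ when $G$ has no field involution; and when $G$ does contain a field involution one uses the counts in Tables~\ref{t: inv q odd} and~\ref{t: f field} together with a congruence~\eqref{eq: f} to exclude it. For Lines~4 and~5 the paper also does \emph{not} use fixed-point sets: it builds an explicit $2$-transitive orbit of size $q_0$ (resp.\ $\sqrt{q_0}$) under a handmade subgroup $N_0\rtimes T_0$, which is not the fixed set of anything. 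Your subfield heuristic ``$\Fix(g)$ carries a projective-line action of $\PSL_2(q_0)$'' is not correct: the centraliser of an involution is dihedral, not $\PSL_2(q_0)$.
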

\begin{proof}
Our notation here is consistent with that established above. For instance, we identify $\Omega$ with the set of $G$-conjugates of $H$. We must consider the actions corresponding to the first column of Table~\ref{t: inv q odd}. 

\noindent\textsc{Line 1: $H$ is a Borel subgroup of $S$}. In this case $G$ acts $2$-transitively on $\Omega$, but $ \alter(\Omega)\nleq G$. Thus $\Omega$ itself is a beautiful subset and hence strongly non-binary (of the type given in Example~\ref{ex: snba1}).

\noindent\textsc{Line 5: $H\cong \PGL_2(q_0)$ where $q=q_0^2$}. We regard $S$ as the projective image of those elements of $\GL_2(q)$ that have square determinant, and we may assume that $H$ consists of the projective image of those elements in $\GL_2(q_0)$ whose entries are all in $\mathbb{F}_{q_0}$.  Let $T$ be the set of diagonal elements in $S$; let $T_0:=H\cap T$, a maximal split torus in $H$; let $\alpha$ be an element of $\Fq$ that does not lie in any proper subfield of $\Fq$; and define
\[
 N_0:=\left\{\begin{pmatrix}
              1 & \alpha b \\ 0 & 1
             \end{pmatrix} \mid b\in\mathbb{F}_{q_0}\right\}.
\]
Clearly, $N_0$ is a subgroup of $G$, $T_0$ normalizes $N_0$ and $T_0\cap N_0=\{1\}$. Thus we can form the semidirect product $X:=N_0\rtimes T_0$ and we observe that $X\cap H=T_0$. Let $\Lambda$ be the orbit of $H$ under the group $X$. One obtains immediately that $\Lambda$ is a set of size $q_0$ on which $X$ acts $2$-transitively. If $q_0>5$, then $G$ does not contain a section isomorphic to $\alter(q_0)$ and we conclude that $\Lambda$ is a beautiful subset for the action of $G$ on $\Omega$. If $q_0=5$, then $q=5^2$ and we can check the result directly using \texttt{magma}~\cite{magma}.

\noindent\textsc{Line 4: $H\cong \PSL_2(q_0)$ where $q=q_0^a$ for some odd prime $a$}. We consider first the special situation where $q$ is a square. We consider $S$ as before, with $H$ the projective image of those elements in $\GL_2(q_0)$ whose entries are all in $\mathbb{F}_{q_0}$ and which have square determinant in $\mathbb{F}_{q_0}$; finally, we know that $H$ has a subgroup $H_1$ isomorphic to $\PGL_2(\sqrt{q_0})$ (since $q_0$ is a square by assumption). We take $H_1$ to be the projective image of those elements in $\GL_2(\sqrt{q_0})$ whose entries are all in $\mathbb{F}_{\sqrt{q_0}}$. Let $T$ be the set of diagonal elements in $S$; let $T_0:=H_1\cap T$, a maximal split torus in $H_1$; let $\alpha$ be an element of $\Fq$ that does not lie in any proper subfield of $\Fq$; and define
\[
 N_0:=\left\{\begin{pmatrix}
              1 & \alpha b \\ 0 & 1
             \end{pmatrix} \mid b\in\mathbb{F}_{\sqrt{q_0}}\right\}.
\]
As above, $N_0$ is a subgroup of $G$, $T_0$ normalizes $N_0$ and $T_0\cap N_0=\{1\}$. Thus we can form the semidirect product $X:=N_0\rtimes T_0$ and we observe that $X\cap H=T_0$. Let $\Lambda$ be the orbit of $H$ under the group $X$. One obtains immediately that $\Lambda$ is a set of size $\sqrt{q_0}$ on which $X$ acts $2$-transitively. If $\sqrt{q_0}>5$, then $G$ does not contain a section isomorphic to $\alter(q_0)$ and we conclude that $\Lambda$ is a beautiful subset for the action of $G$ on $\Omega$. The outstanding cases (that is, $\sqrt{q_0}\le 5$ or $q$ is not a square) will be dealt with below.

\noindent\textsc{Lines 2,3,4,6,7,8}. Here we will show that in every case we can find a strongly non-binary subset $\Lambda$ for which $G^\Lambda$ is as in Example~\ref{ex: snba2}. We let $g$ be an involution in $S$ and $h\in g^G$ with $K:=\langle g,h\rangle$ a Klein $4$-subgroup of $S$ and we let
\[
 \Lambda=\Fix(g)\cup\Fix(h)\cup\Fix(gh).
\]
Observe that $\Lambda$, $\Fix(g)$, $\Fix(h)$ and $\Fix(gh)$ are $g$-invariant and $h$-invariant. Write $\tau_1$ for the permutation induced by $g$ on $\Fix(gh)$ and $\tau_2$ for the permutation induced by $g$ on $\Fix(h)$, and observe that the supports of $\tau_1$ and $\tau_2$ are disjoint, and that $g$ induces the permutation $\tau_1\tau_2$ on $\Lambda$. Observe, furthermore, that $h$ induces the permutation $\tau_1$ on $\Fix(gh)$; now write $\tau_3$ for the involution induced by $h$ on $\Fix(g)$, and observe that the supports of $\tau_1$ and $\tau_3$ are disjoint, and that $h$ induces the permutation $\tau_1\tau_3$ on $\Lambda$.
Observe, finally, that the supports of $\tau_2$ and $\tau_3$ are disjoint and that, since $g,h$ and $gh$ are conjugate, the permutations $\tau_1,\tau_2$ and $\tau_3$ all have support of equal size.

Comparing the entries in the fifth and seventh column of Table~\ref{t: inv q odd}, we see that $|\Fix(g)|\geq |\Fix(K)|+2$. (Here we are using our assumption that $q>9$.) This implies, in particular, that $\tau_1,\tau_2$ and $\tau_3$ are non-trivial permutations of order $2$. Observe that either there exists $f\in G_\Lambda$ inducing the permutation $\tau_1$ on $\Lambda$ or else $\Lambda$ is a strongly non-binary subset of $\Omega$ (it corresponds to Example~\ref{ex: snba2}). 

Suppose that $G$ does not contain a field automorphism of order $2$, and suppose that $f\in G_\Lambda$ induces the permutation $\tau_1$ on $\Lambda$. This would imply that $G_{\Lambda}$ contained an elementary-abelian subgroup of order $8$. But, as we observed earlier, $m_2(Q)\leq 2$ for any section $Q$ in $G$, which is a contradiction. We conclude that $\Lambda$ is a strongly non-binary subset of $\Omega$.

Note that this argument disposes of Lines 6 and 7 of Table~\ref{t: inv q odd}. It also deals with one of the outstanding cases for Line 4, namely the situation where $q$ is not a square.

Suppose from here on that $G$ contains a field automorphism of order $2$. In particular $q$ is a square and $q\equiv 1\pmod 8$. Now, the previous argument implies that $G^\Lambda$ is strongly non-binary unless $G_\Lambda$ contains a field automorphism that induces the element $\tau_1$, so assume that this is the case. There are two possibilities:
 \begin{enumerate}
 \item[(a)] there is a field automorphism $f$ of order $2$ that induces the element $\tau_1$ on $\Lambda$;
 \item[(b)] there is a field automorphism $f$ of order $2$ that fixes $\Lambda$ point-wise (and some element of $G_\Lambda\cap (G\setminus \PGL_2(q))$ of order divisible by $4$ induces the element $\tau_1$ on $\Lambda$).
 \end{enumerate}

Note first that Line 3 of Table~\ref{t: inv q odd} is immediately excluded since field automorphisms of order $2$ have no fixed points in this action (see Table~\ref{t: f field}). We are left only with Lines 2 and 8, as well as Line 4 with $q_0\in\{9,25\}$. 

Assume that Case~(a) holds. Observe that, the action on $\Lambda$ gives a natural homomorphism $\langle S_{(\Lambda)},f,g,h\rangle \to \Sym(\Lambda)$ whose image is elementary abelian of order $8$, and whose kernel is $S_{(\Lambda)}$. What is more, by Lemma~\ref{l: quotients split}, $S_{(\Lambda)}$ has odd order, and we conclude that $\langle f,g,h\rangle$ is elementary abelian of order $8$.

Since $f$ centralizes $\langle g,h\rangle$ we may consider the action of $\langle g,h\rangle$ on $\Fix(f)$. Observe that if $\gamma\not\in\Fix(g)\cup \Fix(h)\cup\Fix(gh)$, then $\gamma^g\neq \gamma^h$, and so $\langle g, h\rangle$ acts semi-regularly on $\Fix(f)\setminus(\Fix(f)\cap\Lambda)$ and so
\[
 |\Fix(f)\setminus(\Fix(f)\cap\Lambda)|\equiv 0 \pmod 4.
\]
Now in this case $\Fix(f)\cap \Lambda = \Fix(g)\cup\Fix(h)$ and we conclude that
\begin{equation}\label{eq: f}
 |\Fix(f)|-2|\Fix(g)|+|\Fix(K)|\equiv 0\pmod 4.
\end{equation}
Let us consider the remaining actions, one by one.

\noindent\textsc{Line 2: $H\cong D_{q-1}$}. In this case \eqref{eq: f} implies that
 \[
  |\Fix(f)|-2|\Fix(g)|+|\Fix(K)|=q-(q+1)+3\equiv 0\pmod 4
 \]
which is a contradiction.

\noindent\textsc{Line 4: $H\cong \PSL_2(q_0)$ with $q=q_0^a$ and $a$ an odd prime}. Note first that we may assume that $q_0\in\{9,25\}$, with $p=\sqrt{q_0}$. Choose $g\in S$ to be an element of order $p$; an easy calculation using \eqref{e: fora} confirms that $g$ fixes $\frac{q}{q_0}$ points of $\Omega$. Now choose $h\in S$ to be an element of order $p$ (hence also fixing the same number of points of $\Omega$) such that $\langle g,h\rangle$ is an elementary-abelian group of order $q_0$. We require, moreover, that $\langle g,h\rangle$ fixes no points of $\Omega$: for this we just make sure that $\langle g,h\rangle$ is not conjugate to a Sylow $p$-subgroup of $H$. As usual we set $\Lambda=\Fix(g)\cup\Fix(h)\cup \Fix(gh)$. We define $\tau_1, \tau_2, \tau_3$ exactly as in the argument for \textsc{Lines 2,3,4,6,7,8}. 

Now if $f$ is an element inducing the permutation $\tau_1$, then $f$ has order divisible by $p$, and $f$ fixes at least $\frac{2q}{q_0}$ elements of $\Omega$. This implies immediately that $f\not\in S$, and we conclude that $a=p$. Now, referring to Lemma~\ref{l: fields}, we see that $f$ must be a field automorphism of degree $a=p$ and an easy calculation with \eqref{e: fora} implies that such an element fixes $\frac12p(p^2+1)$ points of $\Omega$ and so cannot induce the permutation $\tau_1$. Now, referring to Example~\ref{ex: snba2}, we conclude that $\Lambda$ is a strongly non-binary subset.
 
\noindent\textsc{Line 8: $H\cong A_5$.} In this case we assume that $G\leq {\rm P\Sigma L}_2(q)$, otherwise the action on $\Omega$ is not primitive. In particular $\zeta=1$ and \eqref{eq: f} implies that
\[
  |\Fix(f)|-2|\Fix(g)|+|\Fix(K)|=\frac{1}{6}\sqrt{q}(q-1)-\frac{1}{2}(q-1)+2\equiv 0\pmod 4.
 \]
which is a contradiction.

\smallskip

We are left with Case~(b). Note in this case that $q=p^a$ where $a$ is divisible by $4$. This immediately excludes Line 8 of the table (since $q=p^2$ here) as well as the remaining cases for Line $4$ (since here $q=9^a$ or $25^a$ where $a$ is an odd prime). Thus the only line left to consider is Line 2. But note that, for Case (b) to hold, $\Fix(f)$ must contain $\Lambda$ and so
\[
 |\Fix(f)|\geq 3|\Fix(g)|-2|\Fix(K)|.
\]
But Tables~\ref{t: inv q odd} and \ref{t: f field} then give that
\[
 q\geq \frac{3}{2}(q+1)-6.
\]
This is a contradiction for $q>9$ and we are done.
\end{proof}

\begin{lem}\label{l: handy q even}
Let $G$ be an  almost simple primitive permutation group on the set $\Omega$ with socle isomorphic to $\SLq$ with $q=2^a$. If $a>3$, then $\Omega$ contains a strongly non-binary subset.
\end{lem}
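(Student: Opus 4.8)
The plan is to follow the scheme of Lemma~\ref{l: handy q odd}: identify $\Omega$ with $H^G$ for $H$ one of the four maximal subgroups of $S\cong\PSL_2(q)$ listed in Table~\ref{t: inv q even}, and treat each family in turn, in every case exhibiting either a beautiful subset or a strongly non-binary subset of the shape in Example~\ref{ex: snba2}. If $H=[q]:(q-1)$ is a Borel subgroup, then $G$ is $2$-transitive on $\Omega$, which has $q+1$ points; since $a>3$ gives $q\geq 16$, the group $G\le\PGammaL_2(q)$ has order at most $aq(q^2-1)$ and so is far too small to contain $\alter(\Omega)$, whence $\Omega$ is a beautiful subset and we conclude by Example~\ref{ex: snba1} and Lemma~\ref{l: forbidden}. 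If $H=\SL_2(q_0)$ with $q=q_0^r$ ($r$ prime), I would mimic the construction used for Lines~$4$ and~$5$ of the odd case: with $T_0$ a maximal split torus of $H$, an element $\alpha\in\Fq$ lying in no proper subfield, and $N_0$ the corresponding ``twisted'' root subgroup of order $q_0$ (so that $N_0\cap H=1$), the group $X:=N_0\rtimes T_0$ acts $2$-transitively on $\Lambda:=H^X$, a set of size $q_0$; since the only nonabelian composition factor of $\PGammaL_2(q)$ is $\PSL_2(q)$ itself, $G$ has no section isomorphic to $\alter(q_0)$ once $q_0\geq 8$, so $\Lambda$ is beautiful. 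The residual value $q_0=4$, which forces $q\in\{16,64,1024,\dots\}$, is absorbed into the discussion below.

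For the remaining families, $H\in\{D_{2(q-1)},\,D_{2(q+1)},\,\SL_2(4)\}$, I would argue as in the treatment of Lines~$2$--$8$ of the odd proof. Fix a Klein $4$-subgroup $K=\langle g,h\rangle$ inside the unipotent radical of $S$, chosen so that $\Fix_\Omega(K)=\emptyset$: for the dihedral cases this is automatic, since a dihedral group of order $2(q\pm1)$ with $q$ even has cyclic Sylow $2$-subgroups and hence no Klein $4$-subgroup, while for $H=\SL_2(4)$ one picks $K$ in an $S$-class of Klein $4$-subgroups meeting no conjugate of $\SL_2(4)$ (for $q\geq 16$ the Klein $4$-subgroups of $S$ fall into more $S$-classes than those occurring inside conjugates of $\SL_2(4)$). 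As $S$ has a single class of involutions for $q$ even, $g$, $h$ and $gh$ are $S$-conjugate. Put $\Lambda:=\Fix(g)\cup\Fix(h)\cup\Fix(gh)$, and, exactly as in the odd proof, let $\tau_1$, $\tau_2$, $\tau_3$ be the involutions induced respectively by $g$ on $\Fix(gh)$, by $g$ on $\Fix(h)$, and by $h$ on $\Fix(g)$; these have pairwise disjoint supports of common size $|\Fix(g)|-|\Fix(K)|=|\Fix(g)|$, which is $q/2$ in the dihedral cases and $q/4$ for $\SL_2(4)$, hence positive. Since $g$ induces $\tau_1\tau_2$ and $h$ induces $\tau_1\tau_3$ on $\Lambda$, Example~\ref{ex: snba2} shows that $\Lambda$ is strongly non-binary provided no element of $G_\Lambda$ induces $\tau_1$ on $\Lambda$.

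Ruling this out is where the even case genuinely departs from the odd one, and is the step I expect to be the main obstacle: for $q$ even the bound $m_2(Q)\le 2$ on sections fails ($m_2(S)=a\geq 4$), so one cannot finish by producing an elementary abelian subgroup of order $8$. Instead, any such $f$ fixes $\Fix(g)\cup\Fix(h)$ pointwise, and $\Fix(g)\cap\Fix(h)=\Fix(K)=\emptyset$, so $f$ normalises every conjugate of $H$ containing $g$ and every conjugate containing $h$. For $f\in S$ this pins $f$ down: each of $g$, $h$ lies in at least two distinct conjugates of $H$, and for the dihedral cases the characteristic cyclic subgroups of odd order and index $2$ of two distinct such conjugates intersect trivially -- being distinct maximal tori of $\PSL_2(q)$ -- so $\bigcap_{H^y\ni g}H^y=\langle g\rangle$ (and a similar computation, using intersections of conjugates of $\SL_2(4)$ together with $\Fix_\Omega(K)=\emptyset$, handles that case), forcing $f\in\langle g\rangle\cap\langle h\rangle=1$, which contradicts $\tau_1\neq 1$. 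The delicate case is $f\in G\setminus S$: here $f$ carries a nontrivial field-automorphism part, and -- in contrast to the odd case -- a field automorphism can fix as many as $\sim q^{3/2}$ points of $\Omega$, so a mere cardinality bound does not suffice; one must instead exploit that $f$ fixes the \emph{particular} set $\Fix(g)\cup\Fix(h)$, translating this (working over $\Fqt$ with non-split tori when $H=D_{2(q+1)}$) into the condition that $f$ normalise the $\geq 2$ tori inverted by $g$ and by $h$, which for the chosen $K$ again confines $f$ to a subgroup too small to realise $\tau_1$. Any genuinely recalcitrant small case (essentially $q=16$ and $q=64$ with $q_0=4$) can, if necessary, be settled directly in GAP~\cite{GAP} or \texttt{magma}~\cite{magma}.
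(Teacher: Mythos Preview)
Your outline for the Borel case and for $H\cong\SL_2(q_0)$ with $q_0\ge 8$ matches the paper. The remaining three cases, however, diverge from the paper's argument, and in two of them your sketch leaves genuine gaps.

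For $H\cong D_{2(q-1)}$ the paper does \emph{not} use the Klein $4$-group construction at all: it produces a beautiful subset directly by taking $T\le H$ the diagonal torus, the full unipotent subgroup $N=\{\left(\begin{smallmatrix}1&\alpha\\0&1\end{smallmatrix}\right):\alpha\in\Fq\}$, setting $X=N\rtimes T$, and letting $\Lambda=H^X$; this is a $2$-transitive set of size $q$, and $G$ has no $\alter(q)$ section. By lumping $D_{2(q-1)}$ in with the Klein $4$-group cases you create a field-automorphism obstruction that you then do not resolve. For $H\cong D_{2(q+1)}$ both you and the paper use the Klein $4$-group set $\Lambda$, but you miss the decisive point for $f\notin S$: a field automorphism of $2$-power order fixes \emph{no} point of this $\Omega$ (so once the fixed-point bound $|\Fix(f)|\ge q>\tfrac{q}{2}$ excludes $f\in S$, one is finished). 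Your ``$f$ normalises the tori inverted by $g$ and $h$'' line is unnecessary here and, as stated, does not lead anywhere concrete. For $H\cong\SL_2(4)$ the crucial idea absent from your proposal is the \emph{choice} of $K$: the paper selects $K$ lying in no conjugate of $H$ \emph{and} in no conjugate of $\SL_2(\sqrt q)$ (there are enough $S$-classes of Klein $4$-subgroups to do this). One then shows $S_{(\Lambda)}=1$, reduces to $f$ a field involution, and obtains that $\langle K,f\rangle$ is elementary abelian, forcing $K\le C_S(f)\cong\SL_2(\sqrt q)$---contradicting the choice of $K$. Without this extra constraint on $K$, the vague assertion that $f$ is ``confined to a subgroup too small to realise $\tau_1$'' has no content, and your proposed computer check for $q\in\{16,64\}$ does not cover the infinitely many values $q=4^b$ with $b$ an odd prime.
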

\begin{proof}
Our notation here is consistent with that established above. We must consider the actions corresponding to the first column of Table~\ref{t: inv q even}. 

\noindent\textsc{Line 1: $H$ is a Borel subgroup of $S$}. In this case $G$ acts 2-transitively on $\Omega$, but $G\not\cong \alter(\Omega)$ or $\symme(\Omega)$. Thus $\Omega$ itself is a beautiful subset (and hence strongly non-binary).

\noindent\textsc{Line 2: $H\cong D_{2(q-1)}$}. We may assume that $H$ contains $T$, the set of diagonal elements in $S$. We define
\[
 N:=\left\{\begin{pmatrix}
              1 & \alpha \\ 0 & 1
             \end{pmatrix} \mid \alpha\in\mathbb{F}_{q}\right\}.
\]
Now it is clear that $T$ normalizes $N$ and that $T\cap N=\{1\}$. Thus we can form the semidirect product $X=N\rtimes T_0$ and we observe that $X\cap H=T$.

Using the identification of $\Omega$ with the set of $G$-conjugates of $H$, we let $\Lambda$ be the orbit of $H$ under the group $X$. One obtains immediately that $\Lambda$ is a set of size $q\geq 8$ on which $X$ acts 2-transitively. Since $G$ does not contain a section isomorphic to $\alter(q)$ we conclude that $\Lambda$ is a beautiful subset for the action of $G$ on $\Omega$ and we are done.

\noindent\textsc{Line 3: $H\cong D_{2(q+1)}$}.  We proceed similarly to the case where $q$ is odd in Lemma~\ref{l: handy q odd}: let $g$ be an involution in $S$ and $h\in g^G$ with $K:=\langle g,h\rangle$ a Klein $4$-subgroup of $S$ and we let
\[
 \Lambda=\Fix(g)\cup\Fix(h)\cup\Fix(gh).
\]
Observe that $\Lambda$, $\Fix(g)$, $\Fix(h)$ and $\Fix(gh)$ are $g$-invariant and $h$-invariant. Observe, furthermore, that $\Fix(g)$, $\Fix(h)$ and $\Fix(gh)$ are all disjoint and, by Table~\ref{t: inv q even}, are of size $\frac12 q$. Write $\tau_1$ for the permutation induced by $g$ on $\Fix(gh)$, $\tau_2$ for the permutation induced by $g$ on $\Fix(h)$, and $\tau_3$ for the permutation induced by $g$ on $\Fix(gh)$.

Then $g$ induces the permutation $\tau_1\tau_2$ on $\Lambda$, while $h$ induces the permutation $\tau_1\tau_3$ on $\Lambda$. Then $\Lambda$ is a strongly non-binary subset provided there is no element $f\in G_\Lambda$ that induces the permutation $\tau_1$. Such an element must have even order and must fix at least $q$ elements of $\Omega$. Now Table~\ref{t: inv q even} implies that $f\not\in S$. On the other hand, if $f$ is a field-automorphism of order $2^c$, then it does not fix any elements of $\Omega$. We conclude that $\Lambda$ is a strongly non-binary subset and we are done.

\noindent\textsc{Line 4: $H\cong \SL_2(q_0)$ where $q=q_0^b$ for some prime $b$.} Note that, using \cite{bhr}, we can exclude the possibility that $q_0=2$. Suppose first that $q_0>4$, and take $\beta \in \mathbb{F}_q\setminus \mathbb{F}_{q_0}$. 

We may assume that $H$ consists of those elements in $S=\SL_2(q)$ whose entries are all in $\mathbb{F}_{q_0}$. Let $T$ be the set of diagonal elements in $S$; let $T_0=S\cap T$, a maximal split torus in $S$; and define
\[
 N_0:=\left\{\begin{pmatrix}
              1 & \beta \alpha \\ 0 & 1
             \end{pmatrix} \mid \alpha\in\mathbb{F}_{q_0}\right\}.
\]
Now it is clear that $T_0$ normalizes $N_0$ and that $T_0\cap N_0=\{1\}$. Thus we can form the semidirect product $X=N_0\rtimes T_0$ and we observe that $X\cap H=T_0$.

Using the identification of $\Omega$ with the set of $G$-conjugates of $H$, we let $\Lambda$ be the orbit of $H$ under the group $X$. One obtains immediately that $\Lambda$ is a set of size $q_0\geq 8$ on which $X$ acts 2-transitively. Since $G$ does not contain a section isomorphic to $\alter(q_0)$ we conclude that $\Lambda$ is a beautiful subset for the action of $G$ on $\Omega$ and we are done.

The only remaining case is when $q_0=4$. As $q=2^a$, we have $a=2b$. In this case we make use of the fact that the number of $S$-conjugacy classes of subgroups of $S$ isomorphic to a Klein 4-subgroup is $(q+2)/6$.
Since $H$ contains a unique conjugacy class of Klein 4-subgroups, there exists a Klein $4$-subgroup $K:=\langle g,h\rangle$ of $S$ with $K\nleq H^g$, for every $g\in S$, that is, $\Fix(K)=\emptyset$ for the action on cosets of $H$.

Observe that $q$ is a square. We choose $K$ so that, not only does it not lie in a conjugate of $H$, it also doesn't lie in a conjugate of $\SL_2(\sqrt{q})=\SL_2(2^b)$, the centralizer of a field automorphism of order $2$. Define
\[
 \Lambda=\Fix(g)\cup\Fix(h)\cup\Fix(gh).
\]
Observe that $g$ acts on $\Lambda$, and on $\Fix(h)$, and on $\Fix(gh)$. Write $\tau_1$ for the involution induced by $g$ on $\Fix(gh)$ and $\tau_2$ for the permutation induced by $g$ on $\Fix(h)$, and observe that the supports of $\tau_1$ and $\tau_2$ are disjoint, and that $g$ induces the permutation $\tau_1\tau_2$ on $\Lambda$.

Exactly as in the case when $H=D_{2(q-1)}$, $\Lambda$ is either strongly non-binary (and we are done), or else there exists $f\in G^\Lambda$ such that $f$ induces the permutation $\tau_1$ on $\Lambda$. Suppose that this latter possibility occurs, and observe that $\Fix(f)$ contains $\Fix(g)\cup\Fix(h)$ and so $|\Fix(f)|\geq \frac{q}{2}$. If $f\in S$, then $f$ is conjugate to $g$ and $|\Fix(g)|=\frac{q}{4}$, so we have a contradiction.

Suppose that $f\not\in S$. The subgroup structure of $\SL_2(q)$ implies that if a subgroup $X$ is normalized by a Klein $4$-group, then $X$ is elementary abelian of even order. Thus $S_{(\Lambda)}$ is elementary abelian of even order. But if $S_{(\Lambda)}$ is non-trivial, then an involution fixes at least $\frac{3q}{4}$ points which is a contradiction. Thus $S_{(\Lambda)}$ is trivial. 

Now, note that since $q=4^b$, where $b$ is prime, either $q=16$, or else we may assume that $f$ is a field automorphism of order $2$. Thus $\langle K, f\rangle$ is elementary-abelian. But, since $K$ does not lie in a conjugate of $\SL_2(\sqrt{q})$ we have a contradiction here. In the case $q=16$, a moment's thought shows that either $f$ is a field automorphism of order $2$, or else there is a field automorphism of order $2$ that fixes $\Lambda$ point-wise. Either way one concludes that there is a field automorphism $f_1$ such that $\langle K, f_1\rangle$ is elementary-abelian and, again, we have a contradiction. Thus in all cases we have a strongly non-binary subset of the type given in Example~\ref{ex: snba2} and we are done.
\end{proof}

We remark again that Theorem~\ref{t: psl2} for groups with socle $\PSL_2(q)$ is an immediate consequence of Lemmas~\ref{l: forbidden}, \ref{l: handy q odd} and \ref{l: handy q even}.

\section{Groups with socle isomorphic to \texorpdfstring{$^{2}B_2(q)$}{2B2(q)}}\label{s: suzuki}

In this section we prove Theorem~\ref{t: psl2} for groups with socle $\suzuki$. This theorem follows, {\it \`a la} the other main results, from Lemma~\ref{l: suzuki} combined with Lemma~\ref{l: forbidden}. In what follows $G$ is an almost simple group with socle $S\cong\suzuki$, where $q=2^a$ and $a$ is an odd integer with $a\geq 3$. We write $r:=2^{\frac{a+1}{2}}$ and define $\theta$ to be the following field automorphism of $\Fq$:
\[
 \theta: \Fq\to \Fq, \,\,\, x \mapsto x^r.
\]

We need some basic facts, all of which can be found in \cite{suzuki}. First, ${\rm Out}(S)$ is a cyclic group of odd order $a$. Second, $G$ contains a single conjugacy class of involutions; writing $g$ for one of these involutions we note that 
\[
 |g^G|=(q^2+1)(q-1).
\]
Third, the maximal subgroups of $S$ fall into three families: Borel subgroups, normalizers of maximal tori, and subfield subgroups. For the second of these families, we need some fixed point calculations, and these are given in Table~\ref{t: suz} (making use of \eqref{e: fora}).  Each line of this table corresponds to a conjugacy class of maximal tori in $S$; we write $H$ for a maximal subgroup of $S$ and $\Omega$ for the set of right cosets of $H$ in $S$; in the final column we write $K$ for a Klein $4$-subgroup of $S$.
\begin{table}
\begin{tabular}{|c|c|c|c|c|c|}
\hline
$H$ & $|\Omega|$ & $|H\cap g^G|$ & $|\Fix_\Omega(g)|$ & $|\mathcal{P}(H)\cap K^S|$&$|\Fix_\Omega(K)|$\\
\hline
$D_{2(q-1)}$  & $\frac12q^2(q^2+1)$ & $q-1$ & $\frac12q^2$&$0$ & $0$\\
$(q+r+1)\rtimes 4$  & $\frac14q^2(q-1)(q-r+1)$ & $q+r+1$ & $\frac14q^2$ &$0$ & $0$\\
$(q-r+1)\rtimes 4$  & $\frac14q^2(q-1)(q+r+1)$ & $q-r+1$ & $\frac14q^2$ &$0$ &$0$\\
 \hline
\end{tabular}
\caption{Fixed points of involutions and Klein $4$-subgroups for selected primitive actions of almost simple Suzuki groups.}\label{t: suz}
\end{table}

\begin{lem}\label{l: suzuki}
  Let $G$ be an almost simple primitive permutation group on the set $\Omega$ with socle $S\cong\suzuki$. Then $\Omega$ contains a strongly non-binary subset.
\end{lem}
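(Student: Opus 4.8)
The plan is to mimic the structure of the proof of Lemma~\ref{l: handy q even}, running through the three families of maximal subgroups $H$ of $S$ listed above: Borel subgroups, normalizers of maximal tori, and subfield subgroups $\suzuki[q_0]$ (where $q=q_0^b$ for an odd prime $b$). For the Borel subgroup, $\suzuki$ acts $2$-transitively on $\Omega$ with $\alter(\Omega)\not\le G$, so $\Omega$ itself is a beautiful subset, hence strongly non-binary by Example~\ref{ex: snba1}. For the subfield case, I would construct a beautiful subset just as in Lemma~\ref{l: handy q even}: take $\beta\in\Fq\setminus\mathbb F_{q_0}$, let $T_0$ be a maximal split torus of $S$ contained in (a conjugate of) $H$, let $N_0$ be a root subgroup ``scaled by $\beta$'' so that $T_0$ normalizes $N_0$ with $T_0\cap N_0=1$, form $X:=N_0\rtimes T_0$ with $X\cap H=T_0$, and take $\Lambda$ to be the $X$-orbit of the coset $H$. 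Then $\Lambda$ has size $q_0$, $X$ acts $2$-transitively on it, and since $G$ contains no section isomorphic to $\alter(q_0)$, $\Lambda$ is a beautiful subset of $\Omega$. (One must check $q_0\ge 8$, which holds since $q_0=\suzuki$-parameters force $q_0\ge 8$; the case $q_0$ small does not arise.)

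The two torus-normalizer actions --- lines 2 and 3 of Table~\ref{t: suz} (and the $D_{2(q-1)}$ action) --- are the heart of the argument, and here I would use the Example~\ref{ex: snba2} strategy exactly as in the $D_{2(q+1)}$ case of Lemma~\ref{l: handy q even}. Pick an involution $g\in S$ and $h\in g^G$ with $K=\langle g,h\rangle$ a Klein $4$-subgroup of $S$, and set $\Lambda=\Fix(g)\cup\Fix(h)\cup\Fix(gh)$. Table~\ref{t: suz} shows $\Fix(K)=\emptyset$ in each of these actions, so $\Fix(g)$, $\Fix(h)$, $\Fix(gh)$ are pairwise disjoint and all of size $\tfrac12 q^2$ or $\tfrac14 q^2$ (nonzero, hence the induced permutations are nontrivial involutions). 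Then $g$ induces $\tau_1\tau_2$ on $\Lambda$ and $h$ induces $\tau_1\tau_3$, where $\tau_1,\tau_2,\tau_3$ are the permutations induced by $g$ on $\Fix(gh)$, by $g$ on $\Fix(h)$, by $h$ on $\Fix(g)$, all with pairwise disjoint supports. By the Example~\ref{ex: snba2} criterion, $\Lambda$ is strongly non-binary unless some $f\in G_\Lambda$ induces $\tau_1$. Such an $f$ must have even order and fix at least $|\Fix(g)|+|\Fix(h)|$ points; comparing with the fixed-point counts in Table~\ref{t: suz}, $f$ cannot be conjugate to an involution of $S$. And $f$ cannot lie in $G\setminus S$: the outer automorphism group of $S$ has odd order $a$, so there are no involutions (indeed no elements of even order beyond those in $S$) outside $S\cdot(\text{odd part})$ --- more precisely, any $f\in G$ of even order lies in $S$. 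This contradiction shows $\Lambda$ is a strongly non-binary subset.

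The main obstacle --- and the point where the Suzuki case is genuinely cleaner than $\PSL_2(q)$ --- is precisely the handling of $f\notin S$. In Lemma~\ref{l: handy q odd} the presence of field automorphisms of order $2$ (when $q$ is a square) forced a long case analysis; here ${\rm Out}(S)$ is cyclic of odd order, so there are no field automorphisms of $2$-power order at all, and every even-order element of $G$ lies in $S$. This kills the problematic case immediately and is what makes the Example~\ref{ex: snba2} argument close without the ``Case (a)/(b)'' dichotomy. The only residual care needed is to confirm that, in each torus-normalizer action, $K$ can indeed be chosen so that $\Fix_\Omega(K)=\emptyset$ --- which is exactly what the last column of Table~\ref{t: suz} records --- and that the fixed-point sizes $\tfrac12 q^2$, $\tfrac14 q^2$ are large enough (they are, since $q\ge 8$) to guarantee $\tau_1,\tau_2,\tau_3$ are nontrivial and that an element inducing $\tau_1$ would fix more points than any involution of $S$ can. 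Combining the three families with Lemma~\ref{l: forbidden} then yields Theorem~\ref{t: psl2} for socle $\suzuki$.
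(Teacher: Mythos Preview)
Your three-case structure (Borel, torus normalizers, subfield) matches the paper's, and the Borel case is identical. For the subfield case your sketch is on the right track but imprecise in two places: $T_0$ should be a maximal split torus of $H$ (order $q_0-1$), not of $S$; and ``root subgroup scaled by $\beta$'' glosses over the fact that the Sylow $2$-subgroup of $\suzuki$ is non-abelian of order $q^2$---the paper works explicitly with a scaled copy $ZP_2(\zeta,q_0)$ of the \emph{center} of $P_2(q)$, on which $K(q_0)$ acts fixed-point-freely, yielding a $2$-transitive action on $q_0$ points. Your claim that ``$\suzuki$-parameters force $q_0\ge 8$'' is not literally correct (the parameter $q_0=2$ is allowed), but the conclusion is: $\suzuki[2]\cong 5{:}4$ is never maximal for $q>8$ since it sits inside a torus normalizer, and for $q=8$ it coincides with one.

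For the torus normalizers your argument has a genuine false step: ``any $f\in G$ of even order lies in $S$'' is wrong---an involution in $S$ multiplied by a commuting field automorphism of odd order lies in $G\setminus S$ and has even order. What you need (and what the odd order of $\mathrm{Out}(S)$ does give) is that every \emph{involution} of $G$ lies in $S$; then some power $f^k$ is an involution conjugate to $g$, and $\Fix(f)\subseteq\Fix(f^k)$ forces $|\Fix(f^k)|\ge 2|\Fix(g)|>|\Fix(g)|$, a contradiction. With that fix your fixed-point-count argument is valid and is essentially the paper's Ree-group argument transplanted. The paper itself proceeds differently here: it selects only six points (one $K$-orbit of size $2$ from each of $\Fix(g),\Fix(h),\Fix(gh)$) and argues that an $f$ inducing $(\lambda_1\,\lambda_2)$ would, together with $g$, give a Klein $4$-section in the stabilizer of $\lambda_3$---impossible because the Sylow $2$-subgroups of the torus normalizers are cyclic of order $2$ or $4$ and $|\mathrm{Out}(S)|$ is odd. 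Your route is more uniform with the rest of the paper; theirs avoids global fixed-point comparisons in favour of a local $2$-structure obstruction.
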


\begin{proof}
Note that $|S|$ is not divisible by $3$ and hence $G$ does not contain a section isomorphic to an alternating group $\Alt(n)$ with $n\geq 3$.
Referring to \cite{suzuki}, we see that a maximal subgroup of $G$ is necessarily the normalizer in $G$ of a maximal subgroup $H$ of $S$. Thus we can identify $\Omega$ with the set of right cosets of $H$ in $S$. We split into three families, as per the discussion above.

 First, if $H$ is a Borel subgroup, then the action of $G$ on $\Omega$ is $2$-transitive and, since $G$ contains no alternating sections, we obtain immediately that $\Omega$ itself is a beautiful subset.
 
 Second, if $H$ is the normalizer in $S$ of a maximal torus, then we set $K$ to be a Klein 4-subgroup of $S$, and we let $g,h$ be distinct involutions in $K$. Referring to Table~\ref{t: suz}, we see that $g$ and $h$ fix at least $16$ points of $\Omega$, while $K$ fixes none. We set $\lambda_3$ to be one of the fixed points of $g$ and write $\lambda_4$ for the point $\lambda_3^h$. Similarly $\lambda_5\in\Fix(h)$ and $\lambda_6=\lambda_5^g$. Finally pick $\lambda_1\in\Fix(gh)$ and let $\lambda_2=\lambda_1^g$; observe that $\lambda_2=\lambda_1^h$. Now let $\Lambda=\{\lambda_1,\lambda_2,\lambda_3,\lambda_4,\lambda_5,\lambda_6\}$ and observe that $K$ acts on this set with the element $g$ inducing the permutation $(\lambda_1,\lambda_2)(\lambda_5,\lambda_6)$ while the element $h$ induces the permutation $(\lambda_1,\lambda_2)(\lambda_3,\lambda_4)$. Suppose that $f\in G_\Lambda$ induces the permutation $(\lambda_1,\lambda_2)$ on $\Lambda$. This would imply that $f$ and $g$ fix the point $\lambda_3$ and so the stabilizer of $\lambda_3$ must contain a section isomorphic to a Klein 4-subgroup. This is impossible: the Sylow $2$-subgroups of $H$ are cyclic of order $2$ or $4$ and, since $|{\rm Out}(S)|$ is odd, this is true of the stabilizer in $G$ of $\lambda_3$. Therefore $\Lambda$ is a strongly non-binary subset of $\Omega$: it corresponds to Example~\ref{ex: snba2}.
 
 Third, suppose that $H$ is a subfield subgroup of $S$. It is convenient to take $S$ to be the set of $4\times 4$ matrices over $\Fq$ described on \cite[p.133]{suzuki}; then we take $H$ to be the subgroup of $S$ consisting of matrices with entries over $\mathbb{F}_{q_0}$ with $q=q_0^b$ for some prime $b$, and $q_0>2$. The following set forms a Sylow $2$-subgroup of $S$:
 \[
  P_2(q):= \left\{\begin{pmatrix}
          1 & 0 & 0 & 0 \\ \alpha & 1 & 0 & 0 \\ \alpha^{1+\theta}+\beta & \alpha^\theta & 1 & 0 \\ \alpha^{2+\theta}+\alpha\beta+\beta^\theta & \beta&\alpha & 1
         \end{pmatrix} \mid
   \alpha, \beta \in \Fq \right\}.
 \]
The subgroup $P_2(q)$ is normalized by the following subgroup  of $S$,
\[
 K(q):= \left\{\begin{pmatrix}
          \zeta_1 & 0 & 0 & 0 \\ 0 & \zeta_2 & 0 & 0 \\ 0 & 0 & \zeta_3 & 0 \\ 0 & 0 & 0 & \zeta_4
         \end{pmatrix} \mid
   \exists\kappa \in \Fq\setminus\{0\}, \zeta_1^\theta = \kappa^{1+\theta}, \zeta_2^\theta=\kappa, \zeta_3=\zeta_2^{-1}, \zeta_4=\zeta_1^{-1} \right\}.
\]
The group $P_2(q)\rtimes K(q)$ is a maximal Borel subgroup of $S$, while $P_2(q_0)\rtimes K(q_0)$ is a maximal Borel subgroup of $H$. Observe that the center $\Zent {P_2(q)}$ of $P_2(q)$ consists of those matrices for which $\alpha=0$.

Let $\zeta\in\Fq\setminus\mathbb{F}_{q_0}$ and consider the group
\[
 ZP_2(\zeta,q_0):= \left\{\begin{pmatrix}
          1 & 0 & 0 & 0 \\ 0 & 1 & 0 & 0 \\ \zeta\beta & 0 & 1 & 0 \\ (\zeta\beta)^\theta & \zeta\beta & 0 & 1
         \end{pmatrix} \mid
   \beta \in \mathbb{F}_{q_0} \right\}.
\]
Observe that $K(q_0)$ normalizes $ZP_2(\zeta,q_0)$, that $K(q_0)<H$, that $ZP_2(\zeta,q_0)\cap H=\{1\}$ and that $K(q_0)$ acts fixed-point-freely on $ZP_2(\zeta,q_0)$. Let $X:=ZP_2(\zeta,q_0)\rtimes K(q_0)$; identifying $\Omega$ with the set of $G$-conjugates of $H$, we let $\Lambda$ be the orbit of $H$ under the group $X$. One obtains immediately that $\Lambda$ is a set of size $q_0\geq 8$ on which $X$ acts 2-transitively. 
The absence of alternating sections implies that $\Lambda$ is a beautiful subset. 
\end{proof}

\section{Groups with socle isomorphic to \texorpdfstring{$^{2}G_2(q)$}{2G2(q)}}\label{s: ree}

In this section we prove Theorem~\ref{t: psl2} for groups with socle $\ree$. This theorem follows, {\it \`a la} the other main results, from Lemma~\ref{l: ree} combined with Lemma~\ref{l: forbidden}. In what follows $G$ is an almost simple group with socle $S\cong\ree$, where $q=3^a$ and $a$ is an odd integer with $a\geq 3$. We write $r:=3^{\frac{a+1}{2}}$ and define $\theta$ to be the following field automorphism of $\Fq$:
\[
 \theta: \Fq\to \Fq, \,\,\, x \mapsto x^r.
\]

We need some basic facts, all of which can be found in \cite{kleidman}. First, ${\rm Out}(S)$ is a cyclic group of odd order $a$. Second, $G$ contains a single conjugacy class of involutions; writing $g$ for one of these involutions we note that 
\[
 |g^G|=q^2(q^2-q+1).
\]
Third, the order of $S$ is not divisible by $5$, and so $G$ does not contain a section isomorphic to $\alter(n)$ with $n\geq 5$.
Fourth, the maximal subgroups of $G$ fall into four families: Borel subgroups, normalizers of maximal tori, involution centralizers, and subfield subgroups. For all but the first of these families, we need some fixed point calculations, and these are given in Table~\ref{t: ree} (making use of \eqref{e: fora}).  In each line of this table we write $H$ for a maximal subgroup of $S$ and $\Omega$ for the set of right cosets of $H$ in $S$; in the final column we write $K$ for a Klein $4$-subgroup of $S$.
\begin{table}
\begin{tabular}{|c|c|c|c|c|c|}
\hline
$H$ & $|\Omega|$ & $|H\cap g^G|$ & $|\Fix_\Omega(g)|$ & $|K^G\cap \mathcal{P}(H)|$ & $|\Fix_\Omega(K)|$\\
\hline
$(2^2\times D_{\frac{q+1}{2}})\rtimes 3$  & $\frac{q^3(q^2-q+1)(q-1)}{6}$ & $q+4$ & $\frac{q(q-1)(q+4)}{6}$ & $\frac{3q+5}{2}$ & $\frac{3q+5}{2}$\\
$(q+r+1)\rtimes 6$  & $\frac{q^3(q^2-1)(q-r+1)}{6}$ & $q+r+1$ & $\frac{q(q^2-1)}{6}$ & $0$ & $0$ \\
$(q-r+1)\rtimes 6$  & $\frac{q^3(q^2-1)(q+r+1)}{6}$ & $q-r+1$ & $\frac{q(q^2-1)}{6}$ & $0$ & $0$ \\
$2\times \PSL_2(q)$  & $q^2(q^2-q+1)$ & $q^2-q+1$ & $q^2-q+1$ & $\frac{(q+4)q(q-1)}{6}$ & $q+4$\\
${^2G_2}(q_0)$ & $\frac{q^3(q^3+1)(q-1)}{q_0^3(q_0^3+1)(q_0-1)}$ & $q_0^2(q_0^2-q_0+1)$ & $\frac{q(q^2-1)}{q_0(q_0^2-1)}$ & $\frac16q_0^3(q_0^2-q_0+1)(q_0-1)$ & $\frac{q+1}{q_0+1}$ \\
\hline
\end{tabular}
\caption{Fixed points of involutions and Klein $4$-subgroups for selected primitive actions of almost simple Ree groups.}\label{t: ree}
\end{table}

The calculations given in Table~\ref{t: ree} make use of the fact there is a unique class of involutions and a unique class of Klein $4$-subgroups in $S$; their normalizers are maximal subgroups. In particular the normalizer of a Klein $4$-subgroup in $S$ is the group $H$ in the first line of Table~\ref{t: ree}; combined with the fact that a Sylow $2$-subgroup of $S$ is elementary abelian of order $8$, we are able to complete the final entry in that first row. The other entries in the table follow from easy calculations.

\begin{lem}\label{l: ree}
Let $G$ be an almost simple primitive permutation group on the set $\Omega$ with socle $S\cong\ree$. Then $\Omega$ contains a strongly non-binary subset.
\end{lem}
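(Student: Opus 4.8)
I would follow the template of Lemma~\ref{l: suzuki}. Using \cite{kleidman}, every maximal subgroup of $G$ is the normaliser in $G$ of a maximal subgroup $H$ of $S$, so I may identify $\Omega$ with the set of right cosets of such an $H$ in $S$ and treat each family in turn; recall also that $|S|$ is coprime to $5$, so $G$ has no section isomorphic to $\Alt(n)$ for $n\geq 5$. If $H$ is a Borel subgroup, then $G$ acts $2$-transitively on $\Omega$ with $G^\Omega$ neither $\Alt(\Omega)$ nor $\Sym(\Omega)$, so $\Omega$ itself is a beautiful subset and hence strongly non-binary by Example~\ref{ex: snba1}.

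For the remaining actions --- those recorded in Table~\ref{t: ree}, with $H$ a torus normaliser, an involution centraliser, or a subfield subgroup --- I would run a single Klein four-group argument. Pick a Klein four-subgroup $K=\langle g,h\rangle$ of $S$ (all such are $S$-conjugate, as are all involutions of $S$, so the quantities below are exactly the relevant entries of Table~\ref{t: ree}) and set $\Lambda=\Fix_\Omega(g)\cup\Fix_\Omega(h)\cup\Fix_\Omega(gh)$. Writing $\tau_1$, $\tau_2$, $\tau_3$ for the permutations induced by $g$ on $\Fix(gh)$, by $g$ on $\Fix(h)$, and by $h$ on $\Fix(g)$, one checks (exactly as in \textsc{Lines 2,3,4,6,7,8} of the proof of Lemma~\ref{l: handy q odd}) that these have pairwise disjoint supports, that $g$ and $h$ induce $\tau_1\tau_2$ and $\tau_1\tau_3$ on $\Lambda$, and that every point of $\Lambda$ is fixed by $\tau_2$ or $\tau_3$. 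Since $|\Fix_\Omega(g)|>|\Fix_\Omega(K)|$ in each of these actions (a routine comparison of columns in Table~\ref{t: ree}, where the count for $g$ dominates that for $K$ by a polynomial factor in $q$), the permutation $\tau_1$ is a non-identity involution, so by Example~\ref{ex: snba2} (with $\tau=\tau_1$, $\eta_1=\tau_2$, $\eta_2=\tau_3$) the set $\Lambda$ is a strongly non-binary subset of $\Omega$ unless some $f\in G_\Lambda$ induces $\tau_1$ on $\Lambda$.

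To exclude such an $f$: after replacing $f$ by a $2$-element of the coset $fG_{(\Lambda)}$ (which still induces $\tau_1$ on $\Lambda$, since $G_{(\Lambda)}\trianglelefteq G_\Lambda$ and the image of $\langle f\rangle$ in $G^\Lambda$ is a $2$-group), and using that $|{\rm Out}(S)|=a$ is odd and that Sylow $2$-subgroups of $S$ are elementary abelian of order $8$, I may take $f$ to be an involution of $S$; hence $|\Fix_\Omega(f)|=|\Fix_\Omega(g)|$. But the support of $\tau_1$ lies in $\Fix(gh)\setminus\Fix(K)$, which is disjoint from $\Fix(g)\cup\Fix(h)$, so $f$ fixes $\Fix(g)\cup\Fix(h)$ pointwise and therefore $|\Fix_\Omega(f)|\geq 2|\Fix_\Omega(g)|-|\Fix_\Omega(K)|$. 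Together these force $|\Fix_\Omega(K)|\geq|\Fix_\Omega(g)|$, a contradiction.

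I expect the one genuinely delicate point to be this last exclusion step for the involution-centraliser action $H\cong 2\times\PSL_2(q)$ and the torus-normaliser action $H\cong (2^2\times D_{(q+1)/2})\rtimes 3$: there the point stabilisers themselves contain Klein four-subgroups, so the cruder argument used for the torus normalisers in Lemma~\ref{l: suzuki} (``$f$ and $g$ share a fixed point, so a point stabiliser contains a Klein four-section, contradicting that its Sylow $2$-subgroups are small'') is unavailable, and one really does need the fixed-point inequality $|\Fix_\Omega(f)|\geq 2|\Fix_\Omega(g)|-|\Fix_\Omega(K)|$ above. If one prefers, the subfield case $H\cong {}^2\mathrm{G}_2(q_0)$ with $q_0\geq 27$ can instead be disposed of by exhibiting a beautiful subset of size $q_0$, exactly as in the subfield case of Lemma~\ref{l: suzuki}: take the centre of a Sylow $3$-subgroup scaled by an element of $\Fq\setminus\mathbb{F}_{q_0}$ together with a maximal split torus of $H$, whose orbit on $\Omega$ has size $q_0$ and carries a $2$-transitive action.
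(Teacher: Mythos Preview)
Your proposal is correct and follows essentially the same route as the paper: the Borel case via $2$-transitivity, and all remaining cases by the single Klein four-group construction $\Lambda=\Fix(g)\cup\Fix(h)\cup\Fix(gh)$, with the obstruction $f$ excluded by reducing to an involution of $S$ and comparing fixed-point counts against Table~\ref{t: ree}. The only cosmetic difference is that the paper passes to an odd power of $f$ (which, since $|{\rm Out}(S)|$ is odd and Sylow $2$-subgroups of $S$ are elementary abelian, is already an involution of $S$), whereas you replace $f$ by a $2$-element of the coset $fG_{(\Lambda)}$; both reductions yield the same contradiction $|\Fix_\Omega(g)|\geq 2|\Fix_\Omega(g)|-|\Fix_\Omega(K)|$.
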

\begin{proof}
 Referring to \cite{kleidman}, we see that a maximal subgroup of $G$ is necessarily the normalizer in $G$ of a maximal subgroup $H$ of $S$. Thus we can identify $\Omega$ with the set of right cosets of $H$ in $S$. We split into two cases.

 First, if $H$ is a Borel subgroup, then the action of $G$ on $\Omega$ is $2$-transitive and, since $G$ contains no sections isomorphic to $\alter(n)$ with $n\geq 5$, we obtain immediately that $\Omega$ itself is a beautiful subset.
 
Second, if $H$ is not a Borel subgroup, then we set $K$ to be a Klein 4-subgroup of $S$, we let $g,h$ be distinct involutions in $K$, and we let
\[
 \Lambda=\Fix(g)\cup\Fix(h)\cup\Fix(gh).
\]
Observe that $\Lambda$, $\Fix(g)$, $\Fix(h)$ and $\Fix(gh)$ are $g$-invariant and $h$-invariant. Write $\tau_1$ for the involution induced by $g$ on $\Fix(gh)$ and $\tau_2$ for the permutation induced by $g$ on $\Fix(h)$, and observe that the supports of $\tau_1$ and $\tau_2$ are disjoint, and that $g$ induces the permutation $\tau_1\tau_2$ on $\Lambda$. Observe, furthermore, that $h$ induces the permutation $\tau_1$ on $\Fix(gh)$; now write $\tau_3$ for the involution induced by $h$ on $\Fix(g)$, and observe that the supports of $\tau_1$ and $\tau_3$ are disjoint, and that $h$ induces the permutation $\tau_1\tau_3$ on $\Lambda$.

Observe, finally, that the supports of $\tau_2$ and $\tau_3$ are disjoint. Now, suppose that $f\in G_\Lambda$ induces the permutation $\tau_1$ on $\Lambda$. This would imply that $f$ fixes more points than $g$. Since $f$ has even order and all involutions in $G$ are conjugate, some odd power of $f$ is a conjugate of $g$, which is a contradiction. Thus $\Lambda$ is a strongly non-binary subset of $\Omega$ (it corresponds to Example~\ref{ex: snba2}). 
\end{proof}

\section{Groups with socle isomorphic to \texorpdfstring{$\PSU_3(q)$}{PSU(3,q)}}\label{s: psu}

In this section we prove Theorem~\ref{t: psl2} for groups with socle $\PSU_3(q)$. Strictly speaking, this theorem does not follow {\it \`a la} the other main results. Firstly, we do not prove the existence of beautiful subsets or of strongly non-binary subsets: we simply prove that the primitive groups under consideration are not binary. Second, for some primitive actions we make use of computer aided computations. The basic ideas for these computations are inspired from a deeper analysis in~\cite{DV_NG_PS}, where Conjecture~\ref{conj: cherlin} is proved for most almost simple groups with socle a sporadic simple group.

The following lemmas are taken from~\cite{DV_NG_PS} and are stated in a form tailored to our needs in this paper.
\begin{lem}\label{l: again0}Let $G$ be a transitive group on a set $\Omega$, let $\alpha$ be a point of $\Omega$ and let $\Lambda\subseteq \Omega$ be a $G_\alpha$-orbit. If  $G$ is binary, then $G_\alpha^\Lambda$ is binary. 
\end{lem}
\begin{proof}Assume that $G$ is binary. Let $\ell\in\mathbb{N}$ and let $I:=(\lambda_1,\lambda_2,\ldots,\lambda_\ell)$ and $J:=(\lambda_1',\lambda_2',\ldots,\lambda_\ell')$ be two tuples in $\Lambda^\ell$ which are $2$-subtuple complete for the action of $G_\alpha$ on $\Lambda$. Clearly, $I_0:=(\alpha,\lambda_1,\lambda_2,\ldots,\lambda_\ell)$ and $J_0:=(\alpha,\lambda_1',\lambda_2',\ldots,\lambda'_\ell)$ are $2$-subtuple complete for the action of $G$ on $\Omega$; as $G$ is binary, $I_0$ and $J_0$ are in the same $G$-orbit; hence $I$ and $J$ are in the same $G_\alpha$-orbit. From this we deduce that $G_\alpha^\Lambda$ is binary. 
\end{proof}

We caution the reader that in the next lemma when we write $\Lambda$ we \emph{are not} referring to a subset of $\Omega$ -- here the set $\Lambda$ is allowed to be any set whatsoever that satisfies the listed suppositions.

\begin{lem}\label{l: again}
Let $G$ be a primitive group on a set $\Omega$, let $\alpha$ be a point of $\Omega$, let $M$ be the stabilizer of $\alpha$ in $G$ and let $d$ be an integer. Suppose $M\ne 1$ and, for each transitive action of $M$ on a set $\Lambda$ satisfying:
\begin{enumerate}
\item $|\Lambda|>1$, and 
\item every composition factor of $M$ is isomorphic to some section of $M^\Lambda$, and
\item $M$ is binary in its action on $\Lambda$,
\end{enumerate}
we have that $d$ divides $|\Lambda|$. Then either $d$ divides $|\Omega|-1$ or $G$ is not binary.
\end{lem}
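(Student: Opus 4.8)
The strategy is to assume that $G$ is binary and to deduce that $d$ divides $|\Omega|-1$. Put $M:=G_\alpha$ and consider the orbits of $M$ on $\Omega$. Since $G$ is primitive and $M\neq 1$, the group $G$ is not regular; moreover $M$ fixes no point other than $\alpha$, for if $M$ fixed some $\beta\neq\alpha$ then $M=G_\alpha\le G_\beta$, so $G_\alpha=G_\beta$ by transitivity, and then the $G$-invariant equivalence relation $\{(\gamma,\delta)\mid G_\gamma=G_\delta\}$ is strictly larger than the diagonal, hence equals $\Omega\times\Omega$ by primitivity, forcing $G_\alpha=\bigcap_{\gamma\in\Omega}G_\gamma=1$ by faithfulness, a contradiction. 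Thus $\Omega=\{\alpha\}\sqcup\Lambda_1\sqcup\dots\sqcup\Lambda_k$, where the $\Lambda_j$ are the $M$-orbits on $\Omega\setminus\{\alpha\}$, each of size at least $2$. Since $|\Omega|-1=\sum_{j}|\Lambda_j|$, it suffices to show that $d$ divides $|\Lambda_j|$ for every $j$; and for this I will check that the action of $M$ on each $\Lambda_j$ satisfies hypotheses (1)--(3) of the lemma, after which the assumed divisibility property yields $d\mid|\Lambda_j|$.

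Fix $\Lambda:=\Lambda_j$. Hypothesis (1) holds since $|\Lambda|\geq 2$, and hypothesis (3) is exactly Lemma~\ref{l: again0}: as $G$ is binary and $\Lambda$ is a $G_\alpha$-orbit, $M^\Lambda=G_\alpha^\Lambda$ is binary, i.e.\ $M$ is binary on $\Lambda$. The work therefore lies in hypothesis (2), that every composition factor of $M$ be a section of $M^\Lambda$. Writing $N:=M_{(\Lambda)}$ for the kernel of this action, the composition factors of $M$ are those of $M/N\cong M^\Lambda$ (trivially sections of $M^\Lambda$) together with those of $N$, so it remains to deal with the composition factors of $N$.

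To this end I would filter $N$ using the orbital digraph $\Gamma$ of $\Lambda$, the $G$-invariant digraph on $\Omega$ whose out-neighbourhood of $\alpha$ is $\Lambda$. Its weakly connected component containing $\alpha$ is a block, hence is all of $\Omega$ by primitivity, so $\Gamma$ is connected; being also vertex-transitive, $\Gamma$ is strongly connected. Let $B_0\subseteq B_1\subseteq\dots$ be the balls of radius $m$ about $\alpha$, so $B_0=\{\alpha\}$ and $B_\ell=\Omega$ for $\ell$ the (directed) diameter, and set
\[
 D^{(m)}:=\bigcap_{g\in G,\ \alpha^g\in B_m} N^g .
\]
One checks that $D^{(0)}=\bigcap_{g\in M}N^g=N$, that $D^{(\ell)}=\bigcap_{g\in G}N^g=1$ (since $N\le G_\alpha$ and $G$ is faithful), that each $D^{(m)}$ lies in $N$, and that $D^{(m+1)}\trianglelefteq D^{(m)}$ (an element of $D^{(m)}$ fixes $\alpha$, hence stabilises the ball $B_{m+1}$, hence only permutes the terms of the intersection defining $D^{(m+1)}$). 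Furthermore, for $g$ with $\alpha^g\in B_{m+1}$, the subgroup $D^{(m)}$ fixes $\alpha^g$ -- if $\alpha^g$ is at distance exactly $m+1$ then it is an out-neighbour of a vertex of $B_m$, and $D^{(m)}$ fixes every vertex of $B_m$ together with its out-neighbourhood -- so $D^{(m)}\le G_{\alpha^g}$, while $N^g\trianglelefteq G_{\alpha^g}$; therefore $D^{(m)}/(D^{(m)}\cap N^g)\hookrightarrow G_{\alpha^g}/N^g$, and $G_{\alpha^g}/N^g\cong M^\Lambda$ via conjugation by $g$. Intersecting over all such $g$ exhibits $D^{(m)}/D^{(m+1)}$ as a subgroup of a direct product of sections of $M^\Lambda$, so its composition factors are sections of $M^\Lambda$; running over $m=0,\dots,\ell-1$ gives this for every composition factor of $N$, proving (2). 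I expect the verification of (2) -- the filtration argument, and in particular the normality $D^{(m+1)}\trianglelefteq D^{(m)}$ together with the identification $G_{\alpha^g}/N^g\cong M^\Lambda$ -- to be the main point requiring care; everything else is routine.

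With (1)--(3) in hand for each $\Lambda_j$, the hypothesis of the lemma gives $d\mid|\Lambda_j|$ for all $j$, and summing over $j$ yields $d\mid|\Omega|-1$. (One could instead route the binary hypothesis through $2$-closure, as in the discussion preceding the lemma, but the orbit decomposition above seems the most direct path.)
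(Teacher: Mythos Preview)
Your proof is correct and follows essentially the same path as the paper's: assume $G$ is binary, show that $\alpha$ is the unique fixed point of $M$, and then verify that every $M$-orbit $\Lambda\subseteq\Omega\setminus\{\alpha\}$ satisfies conditions (1)--(3), whence $d\mid|\Lambda|$ and so $d\mid|\Omega|-1$. The only substantive difference is in the treatment of condition~(2): the paper dispatches this in one line by citing \cite[Theorem~3.2C]{dixon_mortimer}, whereas you reprove that theorem from scratch via the orbital-digraph filtration $N=D^{(0)}\trianglerighteq D^{(1)}\trianglerighteq\cdots\trianglerighteq D^{(\ell)}=1$. Your filtration argument is the standard proof of that result and is carried out correctly (in particular the key points --- that $D^{(m)}$ fixes $B_{m+1}$ pointwise because $N^h$ kills the out-neighbourhood $\Lambda^h$ of each $\alpha^h\in B_m$, and that consequently $D^{(m)}$ normalises each $N^g$ with $\alpha^g\in B_{m+1}$ --- are sound). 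So your write-up is self-contained where the paper's is not, at the cost of some length; mathematically the two arguments coincide.
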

\begin{proof}
Suppose that $G$ is binary. Since $\{\beta\in\Omega\mid \beta^m=\beta,\forall m\in M\}$ is a block of imprimitivity for $G$ and since $G$ is primitive, we obtain that either $M$ fixes each point of $\Omega$ or $\alpha$ is the only point fixed by $M$. The former possibility is excluded because $M\neq 1$ by hypothesis. Therefore $\alpha$ is the only point fixed by $M$. Let $\Lambda\subseteq\Omega\setminus\{\alpha\}$ be an $M$-orbit.  Thus $|\Lambda|>1$ and (1) holds. Since $G$ is a primitive group on $\Omega$, from~\cite[Theorem~3.2C]{dixon_mortimer}, we obtain that every composition factor of $M$ is isomorphic to some section of $M^\Lambda$ and hence (2) holds. From Lemma~\ref{l: again0}, the action of $M$ on $\Lambda$ is binary and hence (3) also holds. Therefore, $d$ divides $|\Lambda|$ and hence each orbit of $M$ on $\Omega\setminus\{\alpha\}$ has cardinality divisible by $d$. Thus $|\Omega|-1$ is divisible by $d$.  
\end{proof}

\begin{proof}[Proof of Theorem~$\ref{t: psl2}$ for almost simple groups with socle $\PSU_3(q)$.]

Let $G$ be an almost simple primitive group on the set $\Omega$ with socle $S$ isomorphic to $\PSU_3(q)$. Observe that $q\ge 3$ because $\PSU_3(2)$ is soluble. When $q\le 9$, we can check directly with \texttt{magma} the veracity of our statement by constructing all the primitive actions under consideration and checking one-by-one that none is binary (in each case we are able to exhibit a non-binary witness). For the rest of the proof we assume that $q>9$, that is, $q\ge 11$: among other things, this will allow us to exclude some ``novelties'' in dealing with the maximal subgroups of $G$. Moreover,  we let $V:=\mathbb{F}_{q^2}^3$ be the natural $3$-dimensional Hermitian space over the field $\mathbb{F}_{q^2}$ of cardinality $q^2$ for the appropriate covering group of $G$.

Let $\alpha\in \Omega$ and let $M:=G_\alpha$ be the stabilizer in $G$ of the point $\alpha$. We subdivide the proof according to the structure of $M$ as described in~\cite[Section~8, Tables~8.5,~8.6]{bhr}. In this proof we use~\cite{bhr} as a crib.

\smallskip

\noindent\textsc{The group $M$ is in the Aschbacher class $\mathcal{C}_1$. }This case is completely settled in~\cite{gs_binary}, where the authors have proved Cherlin's conjecture for almost simple classical groups acting on the cosets of a maximal subgroup in the Aschbacher class $\mathcal{C}_1$.

\smallskip

\noindent\textsc{The group $M$ is in the Aschbacher class $\mathcal{C}_2$.}  From~\cite{bhr}, we get that the action of $G$ on $\Omega$ is permutation equivalent to the natural action of $G$ on 
\[
\{\{V_1,V_2,V_3\}\mid \dim_{\mathbb{F}_{q^2}}(V_1)=
\dim_{\mathbb{F}_{q^2}}(V_2)=\dim_{\mathbb{F}_{q^2}}(V_3)=1,
V=V_1\perp V_2\perp V_3, V_1,V_2,V_3 \textrm{ non isotropic}\}. 
\]
 Therefore we identify $\Omega$ with the latter set. Let $e_1,e_2,e_3$ be the canonical basis of $V$ and, replacing $G$ by a suitable conjugate, we may assume that the matrix associated to the Hermitian form on $V$ with respect to the basis $e_1,e_2,e_3$ is the identity matrix. Thus $\omega_0:=\{\langle e_1\rangle,\langle e_2\rangle,\langle e_3\rangle\}\in\Omega$. Consider $\Omega_0:=\{\{V_1,V_2,V_3\}\in \Omega\mid V_1=\langle e_1\rangle\}$. Clearly, $G_{\Omega_0}=G_{\langle e_1\rangle}$,  $G_{\langle e_1\rangle}$ is a classical group, $G_{\Omega_0}/\Zent {G_{\Omega_0}}$ is almost simple with socle isomorphic to $\PSL_2(q)$ (here we are using $q>3$), and the action of $G_{\Omega_0}$ on $\Omega_0$ is permutation equivalent to the action of $G_{\langle e_1\rangle}$ on $\Omega_0':=\{\{W_1,W_2\}\mid \dim(W_1)=\dim(W_2), \langle e_1\rangle^\perp=W_1\perp W_2, W_1,W_2 \textrm{ non degenerate}\}$. Therefore $G^{\Omega_0}$ is an almost simple primitive group with socle isomorphic to $\PSL_2(q)$ and having degree $|\Omega_0|=q(q-1)/2$. Applying Theorem~\ref{t: psl2} to $G^{\Omega_0}$, we obtain that $G^{\Omega_0}$ is not binary and hence there exist two $\ell$-tuples $(\{W_{1,1},W_{1,2}\},\ldots,\{W_{\ell,1},W_{\ell,2}\})$ and $(\{W'_{1,1},W'_{1,2}\},\ldots,\{W'_{\ell,1},W'_{\ell,2}\})$ in $\Omega_0^\ell$ which are $2$-subtuple complete for the action of $G_{\Omega_0}$ but not in the same $G_{\Omega_0}$-orbit. By construction the two $\ell$-tuples 
 \begin{align*}
I& :=(\{\langle e_1\rangle,W_{1,1},W_{1,2}\},\{\langle e_1\rangle,W_{2,1},W_{2,2}\},\ldots,\{\langle e_1\rangle,W_{\ell,1},W_{\ell,2}\}), \\
J&:=(\{\langle e_1\rangle,W'_{1,1},W'_{1,2}\},\{\langle e_1\rangle,W'_{2,1},W'_{2,2}\},\ldots,\{\langle e_1\rangle,W'_{\ell,1},W'_{\ell,2}\})  
 \end{align*}
 are in $\Omega^\ell$ and are $2$-subtuple complete. Moreover, a moment's thought yields that $I$ and $J$  are not in the same $G$-orbit. Thus $G$ is not binary.

\smallskip
\noindent\textsc{The group $M$ is in the Aschbacher class $\mathcal{C}_3$. }Here $M$ is the normalizer in $G$ of a maximal non-split torus $T$ of $S$ of order $(q^2-q+1)/\gcd(q+1,3)$. From~\cite{bhr}, we infer that $\nor S T$ is a split extension of $T$ by a cyclic group $\langle x\rangle$ of order $3$ (arising from an element of order $3$ in the Weyl group of $S$), thus $M=C\rtimes K$, with $C=\langle c\rangle$ cyclic such that $C\cap S=T$ and with $K$ abelian. (The group $K$ is the direct product of a cyclic group of order $3$ and a cyclic group of order $|G:G\cap\PGU_3(q)|$.) An inspection of the maximal subgroups of $\PSU_3(q)$ reveals that there exists $g\in \nor S K\setminus M$. Set $\beta:=\alpha^g$. Since $g\notin M$, we get $\beta\neq \alpha$ and, since $g\in \nor S K$, we get $G_\alpha\cap G_\beta=M\cap M^g\ge K$. Therefore $G_\alpha\cap G_\beta=C'\rtimes K$, for some cyclic subgroup $C'$ of $C$.

Set $\Lambda:=\beta^M$. Now, the action induced by $M$ on the $M$-orbit $\Lambda$ is permutation isomorphic to the action of $M=C\rtimes K$ on the right cosets of $M\cap M^g=C'\rtimes K$. We use the ``bar'' notation and denote by $\bar{M}$ the group $M^\Lambda$. Thus $\bar{M}=\langle\bar{c}\rangle\rtimes \bar{K}$ and the action of $\bar{M}$ on $\Lambda$ is permutation isomorphic to the natural action of $\langle\bar{c}\rangle\rtimes\bar{K}$ on $\langle\bar{c}\rangle$: with $\langle\bar{c}\rangle$ acting on $\langle\bar{c}\rangle$ via its regular representation and with $\bar{K}\cong K$ acting on $\langle\bar{c}\rangle$ via conjugation. Now, $\bar{c}^{\bar{x}}=\bar{c}^\kappa$, for some $\kappa\in\mathbb{Z}$ with $\kappa^3\equiv 1\pmod {|\bar{c}|}$ and $\kappa\not\equiv 1\pmod {|\bar{c}|}$. Consider the two triples $I:=(1,\bar{c},\bar{c}^{1+\kappa^2})$ and $J:=(1,\bar{c},\bar{c}^{1+\kappa})$. Now $(1,\bar{c})^{id_{\bar{M}}}=(1,\bar{c})$, $(1,\bar{c}^{1+\kappa^2})^{\bar{x}}=(1,\bar{c}^{\kappa+\kappa^3})=(1,\bar{c}^{\kappa+1})$ and $$(\bar{c},\bar{c}^{1+\kappa^2})^{\bar{c}^{-1}\bar{x}^2\bar{c}}=(\bar{c}^{\bar{c}^{-1}\bar{x}^2\bar{c}},(\bar{c}^{1+\kappa^2})^{\bar{c}^{-1}\bar{x}^2\bar{c}})=(\bar{c},\bar{c}^{\kappa^4+1})=(\bar{c},\bar{c}^{\kappa+1}).$$
Thus $I$ and $J$ are $2$-subtuple complete for the action of $\bar{M}$ on $\langle\bar{c}\rangle$. Observe that $I$ and $J$ are not in the same $\bar{M}$-orbit because the only element of $\bar{M}$ fixing $1$ and the generator $\bar{c}$ of $\langle\bar{c}\rangle$ is the identity, but $\bar{c}^{1+\kappa^2}\ne \bar{c}^{1+\kappa}$ because $\kappa\not\equiv 1\pmod {|\bar{c}|}$. Therefore $\bar{M}$ is not binary. From Lemma~\ref{l: again0}, we deduce that $G$ is not binary.

\smallskip 
\noindent\textsc{The group $M$ is in the Aschbacher class $\mathcal{C}_5$.} Let $H$ be the stabilizer in $M$ of a non-isotropic $1$-dimensional subspace $\langle v\rangle$ of $V$ and let $K$ be the stabilizer of $\langle v\rangle$ in $G$. Thus $K$ is a maximal subgroup of $G$ in the Aschbacher class $\mathcal{C}_1$; moreover, using~\cite{bhr} and $q>8$, we see that there exists $g\in \Zent K\setminus M$. Set $\beta:=\alpha^g$. Since $g\notin M$, we get $\beta\neq \alpha$ and, since $g\in \Zent K$, we get $G_\alpha\cap G_\beta=M\cap M^g\ge M\cap K=H$. Since $H$ is maximal in $M$, we obtain $G_\alpha\cap G_\beta=H$ and hence the action induced by $G_\alpha=M$ on the $G_\alpha$-orbit $\beta^{G_\alpha}$ is permutation isomorphic to the action of $M$ on the right cosets of $H$. By construction, this latter action is the natural action of the classical group $M$ on the cosets of a maximal subgroup in its $\mathcal{C}_1$-Aschbacher class. From~\cite[Theorem~B]{gs_binary}, this action is not binary. Therefore, $G$ is not binary by Lemma~\ref{l: again0}. 

\smallskip

\noindent\textsc{The group $M$ is in the Aschbacher class $\mathcal{C}_6$ or in the Aschbacher class $\mathcal{S}$.} Now the isomorphism class of $M$ is explicitly given in~\cite{bhr}. Since $|M|$ is very small (actually $|M|\le 720$), with the invaluable help of the computer algebra system \texttt{magma}, we compute all the transitive $M$-sets and we select the $M$-sets $\Lambda$ with $|\Lambda|>1$, with every composition factor of $M$ isomorphic to some section of $M^\Lambda$ and with  $M^\Lambda$ binary. In all cases, we see that $|\Lambda|$ is even. Therefore, applying Lemma~\ref{l: again}, we obtain that either $G$ is not binary or $|\Omega|-1$ is even. In the 
latter case, $|\Omega|$ is odd and hence $M$ contains a Sylow $2$-subgroup of $G$. From~\cite{bhr}, we see that a Sylow $2$-subgroup of $M\cap S$ has size $8$, but this is a contradiction because $|\PSU_3(q)|$ is always divisible by $16$.

\end{proof}

\end{document}